\definecolor{webgreen}{rgb}{0,.5,0}
\definecolor{webbrown}{rgb}{.6,0,0}
\newcommand{\ie}{i.e.,~}
\newcommand{\eg}{e.g.,~}
\newcommand{\ed}{\,{\buildrel d \over =}\,}
\newcommand{\cd}{\xrightarrow{d}}
\newcommand{\E}{\mathbb E}
\newcommand{\pr}[1]{\mathbb P \Big ( #1 \Big )}
\newcommand{\prin}[1]{\mathbb P ( #1 )}
\newcommand{\st}{\mathbin{\lvert}}
\newcommand{\N}{\mathbb N}
\newcommand{\R}{\mathbb R}
\renewcommand{\a}{\alpha}
\renewcommand{\b}{\beta}
\newcommand{\e}{\varepsilon}
\newcommand{\h}{\eta}
\renewcommand{\o}{\omega}
\renewcommand{\t}{\tau}
\renewcommand{\l}{\lambda}
\newcommand{\s}{\sigma}
\newcommand{\G}{\Gamma}
\renewcommand{\L}{\Lambda}
\newcommand{\cX}{\mathcal{X}}
\newcommand{\LT}{\mathcal{L}}
\renewcommand{\ss}{\cX^s}
\newcommand{\binf}{\b \to \infty}
\newcommand{\limb}{\lim_{\binf}}
\newcommand{\rmexp}{\mathrm{Exp}}
\newcommand{\rmgeo}{\mathrm{Geo}}
\renewcommand{\DH}{\Delta H}
\renewcommand{\aa}{\mathbf{a}}
\newcommand{\bb}{\mathbf{b}}
\newcommand{\cc}{\mathbf{c}}
\renewcommand{\AA}{V_\aa}
\newcommand{\BB}{V_\bb}
\newcommand{\CC}{V_\cc}
\newcommand{\vdtr}{K}
\newcommand{\hdtr}{L}
\newcommand{\tab}{\t^{\aa}_{\bb}}
\newcommand{\tabc}{\t^{\aa}_{\{\bb,\cc\}}}
\newcommand{\xtbb}{\{X^\b_t \}_{t \in \N}}
\newcommand{\tha}{\tau^\s_{A}}
\newtheorem{thm}{Theorem}[section]
\newtheorem{cor}[thm]{Corollary}
\newtheorem{lem}[thm]{Lemma}
\newtheorem{prop}[thm]{Proposition}
\theoremstyle{definition}
\newtheorem{rem}{Remark}
\title{Tunneling of the hard-core model on finite triangular lattices} 
\date{\today}
\author{Alessandro~Zocca\thanks{California Institute of Technology, Pasadena, California, US. Email: \texttt{azocca@caltech.edu}}}
\begin{document}
\maketitle

\begin{abstract}
\noindent We consider the hard-core model on finite triangular lattices with Metropolis dynamics. Under suitable conditions on the triangular lattice sizes, this interacting particle system has three maximum-occupancy configurations and we investigate its high-fugacity behavior by studying tunneling times, \ie the first hitting times between these maximum-occupancy configurations, and the mixing time. The proof method relies on the analysis of the corresponding state space using geometrical and combinatorial properties of the hard-core configurations on finite triangular lattices, in combination with known results for first hitting times of Metropolis Markov chains in the equivalent zero-temperature limit. In particular, we show how the order of magnitude of the expected tunneling times depends on the triangular lattice sizes in the low-temperature regime and prove the asymptotic exponentiality of the rescaled tunneling time leveraging the intrinsic symmetry of the state space.\\

\noindent \textit{Keywords:} hard-core model; Metropolis dynamics; finite triangular lattice; tunneling time; mixing time.
\end{abstract}


\section{Introduction}
\label{sec1}
The \textit{hard-core model} was introduced in the chemistry and statistical physics literature to describe the behavior of a gas whose particles have non-negligible radii and cannot overlap~\cite{GF65,RC66,vdBS94}. 

A finite undirected graph $\L = (V,E)$ describes the spatial structure of the finite volume in which the particles interact. More specifically, the vertices represent the possible sites where particles can reside, while the hard-core constraints are represented by edges connecting the pairs of sites that cannot be occupied simultaneously. Particle configurations that do not violate these hard-core constraints are then in one-to-one correspondence with the independent sets of the graph $\L$, whose collection we denote by $\mathcal I(\L)$. 
Given $\l >0$, the \textit{hard-core measure with activity} (or \textit{fugacity}) $\l$ is the probability measure on $\mathcal I(\L)$ defined by
\begin{equation}
\label{eq:hcgb}
	\pi_{\l}(I) := \frac{\l^{|I|}}{Z_{\l}(\L)}, \quad I \in \mathcal I(\L),
\end{equation}
where $Z_{\l}(\L)$ is the appropriate normalizing constant, also called \textit{partition function}. 


In this paper we focus on the \textit{dynamics} of particles with hard-core repulsion on \textit{finite} graphs. The evolution over time of this interacting particle system is described by a reversible single-site update Markov chain $\{X_t\}_{t \in \N}$ with Metropolis transition probabilities, parametrized by the fugacity $\l \geq 1$. More precisely, at every step a site is selected uniformly at random; if such a site is unoccupied, then a particle is placed there with probability $1$ if and only if all the neighboring sites are also unoccupied; if instead the selected site is occupied, the particle is removed with probability $1/\l$.

Other single-site update dynamics (\eg Glauber dynamics) for the hard-core model have received a lot of attention in the discrete mathematics community~\cite{BCFKTVV99,Dyer2002,G08,GT04,GT06,GR10,LV99,MWW08}, where they are instrumental to sample weighted independent sets. Aiming to understand the performance of this local Markov chain Monte Carlo method, the main focus of this literature is on \textit{mixing times} and on how they scale in the graph size. Indeed, the behavior of these MCMC changes dramatically as the fugacity $\l$ grows, going from a fast convergence to stationarity (``\textit{fast mixing}'') to an exponentially slow one (``\textit{slow mixing}''); this phenomenon is intimately related to the aforementioned phase transition phenomenon of the hard-core model on infinite graphs.

The main focus of the present paper is on the hard-core particle dynamics $\{X_t\}_{t \in \N}$ on finite graphs when the fugacity grows large, \ie$\l \to \infty$. In this regime, the hard-core measure~\eqref{eq:hcgb} favors configurations with a maximum number of particles and we are interested in describing the \textit{tunneling behavior} of such a particle system, \ie how it evolves between these maximum-occupancy configurations. To understand the transient behavior of the hard-core model in the high-fugacity regime, we study the asymptotic behavior of the \textit{first hitting times} of the Markov chain $\{X_t\}_{t \in \N}$ between the maximum-occupancy configurations, which tell us how ``rigid'' they are and how long it takes for the particle system to ``switch'' between them.

The hard-core model has been successfully used to model certain random-access protocols for wireless networks~\cite{DDT08,WK05,Y83}. In this context understanding the tunneling behavior of the hard-core model is instrumental to analyze \textit{temporal starvation phenomena} for these communication networks and their impact on performance~\cite{ZBvLN13}.

Tunneling phenomena of the hard-core model have already been studied on complete partite graphs~\cite{ZBvL12,ZBvL15} and on square grid graphs~\cite{NZB16}. In this work we focus on the case where $\L$ is a \textit{finite triangular lattice}. Imposing periodic boundary conditions, there are three maximum-occupancy configurations on such graphs, as illustrated in Figure~\ref{fig:threedominantstates}. These three hard-core configurations, denoted as $\aa$, $\bb$, and $\cc$, correspond to the tripartition of $\L$. \vspace{-0.1cm}
{\captionsetup[subfigure]{labelformat=empty}%
\begin{figure}[!ht]
	\centering
	\subfloat{\includegraphics[scale=0.99]{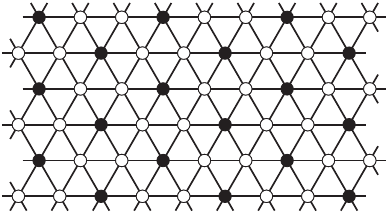}}
	\hspace{1.5cm}
	\subfloat{\includegraphics[scale=0.99]{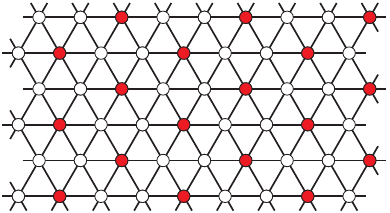}}
	\vspace{0.3cm}\\
	\subfloat{\includegraphics[scale=0.99]{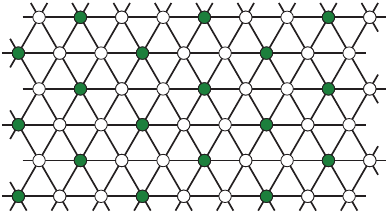}}
	\caption{The three maximum-occupancy configurations $\aa$, $\bb$, and $\cc$ on the $6 \times 9$ triangular lattice} 
	\label{fig:threedominantstates}
\end{figure}}
\FloatBarrier
As the fugacity grows large, this particle system spends roughly one third of the time in each of these three configurations. However, it takes a long time for the Markov chain $\{X_t\}_{t \in \N}$ to move from one maximum-occupancy configuration to another, since such a transition involves the occurrence of rare events. Intuitively, along any such a transition, the Markov chain must follow a path through mixed-activity particle patterns that, having fewer particles, are highly unlikely in view of~\eqref{eq:hcgb} and the time to reach such configurations is correspondingly long. 

By introducing the \textit{inverse temperature} $\b = \log \l$ and an appropriate Hamiltonian, the Markov chain $\{X_t\}_{t\in \N}$ can be seen as a Freidlin-Wentzell Markov chain with Metropolis transition probabilities and the hard-core measure~\eqref{eq:hcgb} rewrites as a Gibbs distribution. In this setting, the high-fugacity regime in which we are interested corresponds to the low-temperature limit $\binf$ and the maximum-occupancy configurations are the \textit{stable configurations} of the system, \ie the global minima of the Hamiltonian.

This identification allow us to use the \textit{pathwise approach}~\cite{CGOV84,MNOS04}, a framework that has been successfully used to study metastability problem for many finite-volume models in a low-temperature regime. 
In this paper we mostly make use of the extension of the classical pathwise approach developed in~\cite{NZB16} that covers the case of \textit{tunneling times}. The crucial idea behind this method is to understand which paths the Markov chain most likely follows in the low-temperature regime and derive from them asymptotic results for hitting times. For Freidlin-Wentzell Markov chains this can be done by analyzing the \textit{energy landscape} to find the paths between the initial and the target configurations with a minimum energy barrier. In the case of the tunneling times between stable configurations of the hard-core model, this problem reduces to identifying the most efficient way, starting from a stable configuration to progressively add the particles present in the target stable configuration.



By exploring detailed geometric properties of the mixed-activity hard-core configurations on finite triangular lattices, we develop a novel combinatorial method to quantify their ``energy inefficiency'' and obtain in this way the \textit{minimum energy barrier} $\G(\L)>0$ that has to be overcome in the energy landscape for the required transition to occur. In particular, we show how this minimum energy barrier $\G(\L)$ depends on the sizes of the finite triangular lattice $\L$. In our main result we characterize the asymptotic behavior for the tunneling times between stable configurations giving sharp bounds in probability and proving that the order of magnitude of their expected values is equal to $\G(\L)$ on a logarithmic scale.
Furthermore, we prove that the tunneling times scaled by their expected values are exponentially distributed in the low-temperature limit, leveraging in a nontrivial way the intrinsic symmetry of the energy landscape. 

Lastly, using structural properties of the energy landscapes and classical results~\cite{C99,Miclo2002} for Freidlin-Wentzell Markov chains, we show that the timescale $\l^{\G(\L)} = e^{\b \G(\L)}$ at which transitions between maximum-occupancy configurations most likely occur is also the order of magnitude of the \textit{mixing time} of the Markov chain $\{X_t\}_{t\in \N}$, proving that the hard-core dynamics exhibit \textit{slow mixing} on finite triangular lattices.

\section{Model description and main results}
\label{sec2}
We consider the hard-core model on finite triangular lattices with periodic boundary conditions. More precisely, given two integers $\vdtr \geq 2$ and $\hdtr \geq 1$, we consider the $2K \times 3L$ \textit{triangular grid} $\L$, that is the subgraph of the triangular lattice consisting of $N := |V| = 6 \vdtr \hdtr$ sites placed on $2 \vdtr$ rows of $3 \hdtr$ sites each
, see Figure~\ref{fig:Tmnillustration}. 
\begin{figure}[!ht]
	\centering
	\includegraphics{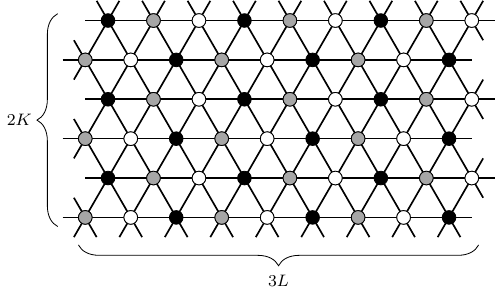}
	\caption{The $6 \times 9$ triangular grid $\L$ and its three components highlighted using different colors}
	\label{fig:Tmnillustration}
\end{figure}
\FloatBarrier
We impose periodic boundary conditions on $\L$ to preserve symmetry: indeed this choice makes $\L$ a vertex-transitive graph in which every vertex has the same local neighborhood. The triangular grid $\L$ has a natural tri-partition $V=\AA \cup \BB \cup \CC$, which is highlighted in Figure~\ref{fig:Tmnillustration} by coloring the three components in gray, black, and white, respectively. Thanks to the chosen sizes, the three components of $\L$ have the same number of sites
\begin{equation}
\label{eq:componentsizestriangulargrid}
	|\AA|=|\BB|=|\CC| = \frac{N}{3} = 2 \vdtr \hdtr.
\end{equation}
A particle configuration on $\L$ is a map $\s: V \to \{0,1\}$, in which we set $\s(v)=1$ when the site $v$ is occupied and $\s(v)=0$ otherwise.
A particle configuration on $\L$ is a \textit{hard-core configuration} if $\s(v)\s(w) = 0$ for every pair of neighboring sites $v,w$. We denote by $\cX \subset \{0,1\}^N$ the set of all hard-core configurations on $\L$.

Let $\aa,\bb$ and $\cc$ be the hard-core configurations on the triangular grid $\L$ defined as
\[
	\aa(v):=\mathds{1}_{\{v \in \AA\}}(v),
	\quad
	\bb(v):=\mathds{1}_{\{v \in \BB\}}(v),
	\quad \text{ and } \quad
	\cc(v):=\mathds{1}_{\{v \in \CC\}}(v).
\]
In Section~\ref{sec3} we show that $\aa,\bb$ and $\cc$ are the maximum-occupancy configurations of the hard-core model on $\L$.

We are interested in studying the Metropolis dynamics for such a model, that is the family of Markov chains $\{X^\b_t\}_{t \in \N}$ on $\cX$ parametrized by the inverse temperature $\b >0$ with transition probabilities
\[
	P_\b(\s,\s'):=
	\begin{cases}
		Q(\s,\s') e^{-\b [H(\s')-H(\s)]^+}, 	& \text{ if } \s \neq \s',\\
		1-\sum_{\h \neq \s } P_\b(\s,\h), 	& \text{ if } \s=\s',
	\end{cases}
\]
where the \textit{connectivity function} $Q: \{ (\s,\s') \in \cX \times \cX ~:~ \s\neq \s'\} \to [0,1]$ allows only single-site updates:
\begin{equation}
\label{eq:connectivityfunction_TR}
	Q(\s,\s'):=
	\begin{cases}
		\frac{1}{N}, 									& \text{ if } |\{v \in V ~:~ \s(v) \neq \s'(v)\}|=1,\\
		0, 													& \text{ if } |\{v \in V ~:~ \s(v) \neq \s'(v)\}| > 1,
	\end{cases}
\end{equation}
and $H: \cX \to \R$  is the \textit{energy function} or \textit{Hamiltonian} defined as
\begin{equation}
\label{eq:energyfunction_TR}
	H(\s):= -\sum_{v \in V} \s(v).
\end{equation}
In other words, each configuration $\s \in \cX$ is assigned an \textit{energy} $H(\s)$ proportional to the total number of particles in $\s$. We remark that here the energy of a hard-core configuration does not describe the interaction potential between particles, which is already fully captured by the set $\cX$ of hard-core configurations on $\L$.

The triplet $(\cX,H,Q)$ is called \textit{energy landscape} and we denote by $\ss$ the set of \textit{stable configurations} of the energy landscape, that is the set of global minima of $H$ on $\cX$. Since $\cX$ is finite, the set $\ss$ is always nonempty. 

The Markov chain $\smash{\xtbb}$ is reversible with respect to the \textit{Gibbs measure} at inverse temperature $\b$ associated to the Hamiltonian $H$, namely
\[
	\mu_\b(\s):=\frac{1}{Z_\b} e^{-\b H(\s)}, \quad \s \in \cX,
\]
where $Z_\b:=\sum_{\s' \in \cX} e^{-\b H(\s')}$ is the normalizing partition function. Furthermore, it is well-known (\eg \cite[Proposition 1.1]{C99}) that the Markov chain $\smash{\xtbb}$ is aperiodic and irreducible on $\cX$. Hence, $\smash{\xtbb}$ is ergodic on $\cX$ with stationary distribution $\mu_\b$. For a nonempty subset $A \subset \cX$ and a configuration $\s \in \cX$, we denote by $\tha$ the \textit{first hitting time} of the subset $A$ for the Markov chain $\smash{\xtbb}$ with initial configuration $\s$ at time $t=0$, \ie
\[
	\tha:=\inf \{ t\in \N  : X^{\b}_t \in A \st X^{\b}_0=\s\}.
\]
We will refer to $\tha$ as \textit{tunneling time} if $\s$ is a stable configuration and the target set is some $A \subseteq \ss \setminus \{\s\}$.

The first main result describes the asymptotic behavior of the tunneling times $\t^{\aa}_{\bb}$ and $\t^{\aa}_{\{\bb,\cc\}}$ on the triangular grid $\L$ in the low-temperature regime $\binf$. 

\begin{thm}[Asymptotic behavior of tunneling times]
\label{thm:tabc}
Consider the Metropolis Markov chain $\{ X^\b_t \}_{t \in \N}$ corresponding to the hard-core dynamics on the $2K \times 3L$ triangular grid $\L$ and define
\begin{equation}
\label{def:gamma_tr}
	\G(\L):=\min\{ \vdtr, 2\hdtr \} +1.
\end{equation}
Then,
\begin{itemize}
	\item[\textup{(i)}] $\displaystyle \limb \mathbb P_\b  \Bigl ( e^{\b (\G(\L) - \e )} \leq \t^{\aa}_{\bb} \leq \t^{\aa}_{\{\bb,\cc\}} \leq e^{\b (\G(\L) + \e )} \Bigr ) =1;$
	\item[\textup{(ii)}] $\displaystyle \limb \frac{1}{\b} \log \E \t^{\aa}_{\bb} = \G(\L) = \limb \frac{1}{\b} \log \E \t^{\aa}_{\{\bb,\cc\}}; $
	\item[\textup{(iii)}] $\displaystyle \frac{\t^{\aa}_{\{\bb,\cc\}}}{\E \t^{\aa}_{\{\bb,\cc\}}} \cd \mathrm{Exp}(1), \quad \mathrm{ as } \, \, \binf;$
	\item[\textup{(iv)}] $\displaystyle \frac{\t^{\aa}_{\bb}}{\E \t^{\aa}_{\bb}} \cd \mathrm{Exp}(1), \quad \mathrm{ as } \, \, \binf.$
\end{itemize}
\end{thm}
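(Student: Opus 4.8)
The plan is to reduce every assertion to a single geometric quantity and then feed it into the general tunneling machinery of~\cite{NZB16}. The starting point, established in Section~\ref{sec3}, is that $\ss=\{\aa,\bb,\cc\}$ with $H(\aa)=H(\bb)=H(\cc)=-2\vdtr\hdtr$. For $\s,\s'\in\cX$ let $\Phi(\s,\s')$ denote the communication height, i.e. the minimum over all paths $\g$ from $\s$ to $\s'$ of $\max_{\h\in\g}H(\h)$, extended to a target set by $\Phi(\s,A)=\min_{\s'\in A}\Phi(\s,\s')$. The whole theorem then rests on the identity
\begin{equation*}
	\Phi(\aa,\bb)-H(\aa)=\Phi(\aa,\{\bb,\cc\})-H(\aa)=\G(\L)=\min\{\vdtr,2\hdtr\}+1,
\end{equation*}
whose only nontrivial content is the evaluation $\Phi(\aa,\{\bb,\cc\})-H(\aa)=\G(\L)$ (the leftmost equality is immediate from the symmetry $\Phi(\aa,\bb)=\Phi(\aa,\cc)$, the rightmost is the definition of $\G(\L)$).

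The combinatorial evaluation of $\G(\L)$ is the heart of the argument and the step I expect to be by far the hardest. I would prove matching bounds. For the \emph{upper bound} I would construct an explicit reference path from $\aa$ to $\bb$ that converts the $\aa$-pattern into the $\bb$-pattern by sweeping across $\L$ along its shorter dimension: at each stage the already-converted region and the not-yet-converted region are separated by a strip of empty sites forced by the hard-core constraint, and I would check that the resulting particle deficit with respect to a maximum-occupancy configuration never exceeds $\min\{\vdtr,2\hdtr\}+1$; since $H=-(\text{number of particles})$ this bounds the height of the path by $H(\aa)+\G(\L)$. The \emph{lower bound} is the delicate direction: one must show that \emph{every} path from $\aa$ to $\bb$ passes through a configuration with at least $\G(\L)$ fewer particles than $\aa$. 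I would do this by associating with each configuration along an arbitrary path a suitable interface between the sites still carrying the $\aa$-pattern and those already carrying the $\bb$-pattern, and by showing, via a combinatorial isoperimetric estimate specific to mixed-activity hard-core configurations on the triangular grid and using the periodic boundary conditions, that when the interface first wraps around $\L$ it must carry an energy cost of at least $\min\{\vdtr,2\hdtr\}$, one further unavoidable empty site accounting for the $+1$.

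With $\G(\L)$ in hand, parts~(i) and~(ii) follow from the model-independent results of~\cite{NZB16}. For the symmetric target $\{\bb,\cc\}=\ss\setminus\{\aa\}$ these give directly that $\tabc$ lies in $[e^{\b(\G(\L)-\e)},e^{\b(\G(\L)+\e)}]$ with probability tending to one and that $\frac1\b\log\E\tabc\to\G(\L)$, once one checks the recurrence hypotheses of that framework, namely that every configuration outside $\ss$ can be driven to a stable configuration along a path of height strictly below $H(\aa)+\G(\L)$; these are themselves a by-product of the reference paths built for the upper bound. The statements for the single target $\bb$ then transfer: the trivial inclusion gives $\tabc\le\tab$ pathwise, so the lower bound carries over, while the decomposition of the next paragraph shows $\tab$ is a sum of an order-one number of excursions each below $e^{\b(\G(\L)+\e)}$, so it does not overshoot the window either. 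This squeezes both $\tab$ and $\tabc$ into the window of~(i) and yields $\frac1\b\log\E\tab\to\G(\L)$ as well.

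The exponential laws~(iii) and~(iv) are where the symmetry of the landscape is used. Part~(iii) is the asymptotic exponentiality of the tunneling time to the full set $\ss\setminus\{\aa\}$ of remaining stable states, which is exactly the output of the exponential-law theorem of~\cite{NZB16}, so $\tabc/\E\tabc\cd\rmexp(1)$. For part~(iv) the target is the single state $\bb$, and the obstruction is that the chain may hit $\cc$ first; here I would exploit the lattice symmetries permuting $\aa,\bb,\cc$. The reflection fixing $\aa$ and interchanging $\bb,\cc$ shows that, started at $\aa$, the chain hits $\bb$ or $\cc$ first each with probability exactly $\tfrac12$, while by the same symmetry the excursion length has identical conditional law in the two cases, equal to that of $\tabc$; the analogous statement holds from $\cc$ after relabelling. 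Applying the strong Markov property at each visit to the current pair of non-target states I would obtain the exact distributional identity
\begin{equation*}
	\tab \ed \sum_{i=1}^{M} D_i,
\end{equation*}
where $D_1,D_2,\dots$ are i.i.d. copies of $\tabc$ and $M$ is an independent $\mathrm{Geometric}(\tfrac12)$ variable. Since a geometric sum of i.i.d. terms whose normalizations converge to $\rmexp(1)$ is again asymptotically $\rmexp(1)$ — a one-line Laplace-transform computation using~(iii), which also gives $\E\tab=2\,\E\tabc$ — I would conclude $\tab/\E\tab\cd\rmexp(1)$. The main technical care in this last step is to justify the exact factor $\tfrac12$ and the independence of $M$ from $(D_i)$ uniformly in $\b$, both of which hinge on the precise symmetries of the torus rather than on asymptotics.
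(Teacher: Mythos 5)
Your proposal follows essentially the same route as the paper: the same evaluation of $\G(\L)$ via a matching reference-path upper bound and a ``first wrap-around/bridge'' lower bound, the same appeal to the model-independent results of~\cite{NZB16} for (i)--(iii) after verifying absence of deep cycles, and for (iv) exactly the paper's reflection-symmetry argument yielding the $\mathrm{Geometric}(1/2)$ sum of i.i.d.\ copies of $\tabc$ and the Laplace-transform conclusion. The only minor deviation is that you transfer (i)--(ii) to the single target $\bb$ through the geometric decomposition rather than by verifying the ``absence of deep cycles'' condition directly for the pair $(\aa,\{\bb\})$, which is an equally valid shortcut.
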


The proofs of statements (i), (ii), and (iii) of the latter theorem are presented in Section~\ref{newsec4} and leverage the general framework for hitting time asymptotics developed in~\cite{NZB16} in combination with the analysis of the energy landscape corresponding to the hard-core model on the triangular grid $\L$ to which Section~\ref{sec3} is devoted.

As established by the next theorem, which is our second main result, the structural properties of the energy landscapes that will be presented in Section~\ref{sec3} also yield the following result for the \textit{mixing time}. Recall that the mixing time describes the time required for the distance (measured in total variation) to stationarity to become small. More precisely, for every $0 < \e < 1$, we define the \textit{mixing time} $t^{\mathrm{mix}}_\b(\e)$ by
\[
	t^{\mathrm{mix}}_\b(\e):=\min\{ n \geq 0 ~:~ \max_{\s \in \cX} \| P^n_\b(\s,\cdot) - \mu_\b(\cdot) \|_{\mathrm{TV}} \leq \e \},
\]
where $\| \nu - \nu' \|_{\mathrm{TV}}:=\frac{1}{2} \sum_{\s \in \cX} |\nu(\s)-\nu'(\s)|$ for any two probability distributions $\nu,\nu'$ on $\cX$. Another classical notion to investigate the speed of convergence of Markov chains is the \textit{spectral gap}, which is defined as $	\smash{\rho_\b := 1-\a_2}$, where $\smash{1=\a_{1} > \a_2 \geq \dots \geq \a_{|\cX|} \geq -1}$ are the eigenvalues of the matrix $\smash{(P_\b(\s,\s'))_{\s,\s' \in \cX}}$. The spectral gap can be equivalently defined using the Dirichlet form associated with the pair $(P_\b, \mu_\b)$, see~\cite[Lemma 13.12]{LPW09}.

\begin{thm}[Mixing time and spectral gap]\label{thm:mix_triangularlattice}
Consider the Metropolis Markov chain $\smash{\xtbb}$ corresponding to the hard-core dynamics on the $2K \times 3L$ triangular grid $\L$ and define $\G(\L)$ as in~\eqref{def:gamma_tr}. Then, for any $0 < \e < 1$, 
\[
	\limb \frac{1}{\b} \log t^{\mathrm{mix}}_\b(\e) = \G(\L).
\]
Furthermore, there exist two positive constants $0 < c_1 \leq c_2 < \infty$ independent of $\b$ such that the spectral gap $\rho_\b$ of the Markov chain $\smash{\xtbb}$ satisfies
\[
	c_1 e^{-\b \G(\L)} \leq \rho_\b \leq c_2 e^{-\b \G(\L)} \qquad \forall \, \b \geq 0.
\]
\end{thm}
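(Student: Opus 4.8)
The plan is to obtain both statements by specializing the general low-temperature results for Freidlin--Wentzell Markov chains of~\cite{C99,Miclo2002} to the hard-core energy landscape $(\cX,H,Q)$, using as the only model-specific input the structural analysis of Section~\ref{sec3}. The guiding observation is that, on the exponential scale, both the mixing time and the spectral gap are governed by a single energy-landscape parameter, the \emph{maximal depth} $\G_m$ of the landscape, so that the entire argument reduces to proving the identity $\G_m=\G(\L)$.

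First I would set up the general machinery. For $\s\in\cX$ and a target set $A\subseteq\cX$, let $\Phi(\s,A)$ denote the communication height introduced in Section~\ref{sec3}, \ie the minimal value $m$ such that $\s$ is joined to $A$ by a path all of whose configurations have energy at most $m$. Define
\[
	\G_m:=\max_{\s\in\cX}\bigl(\Phi(\s,\cX_\s)-H(\s)\bigr),\qquad \cX_\s:=\{\h\in\cX:H(\h)\le H(\s),\ \h\neq\s\}.
\]
With this notation the results of~\cite{C99,Miclo2002} yield, for every $0<\e<1$,
\[
	\limb\frac1\b\log t^{\mathrm{mix}}_\b(\e)=\G_m,
\]
together with two $\b$-independent constants $0<c_1\le c_2<\infty$ such that $c_1e^{-\b\G_m}\le\rho_\b\le c_2e^{-\b\G_m}$ for all $\b\ge0$; over the bounded range of small $\b$ both bounds are of order one and are absorbed into the constants. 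It therefore suffices to show $\G_m=\G(\L)$.

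To prove $\G_m\ge\G(\L)$ I would use that $\aa,\bb,\cc$ are exactly the stable configurations (Section~\ref{sec3}) and are the only configurations of minimal energy, so that $\cX_\aa=\{\bb,\cc\}$. The energy-barrier computation underlying Theorem~\ref{thm:tabc} gives $\Phi(\aa,\{\bb,\cc\})-H(\aa)=\G(\L)$, and hence the depth of $\aa$ equals $\G(\L)$; by the tripartition symmetry the same holds for $\bb$ and $\cc$, whence $\G_m\ge\G(\L)$. For the reverse inequality I would invoke the \emph{recurrence property} of Section~\ref{sec3}: the maximal stability level of the non-stable configurations is at most $\G(\L)$, meaning that from any $\s\notin\ss$ one can reach a configuration of strictly lower energy along a path whose height exceeds $H(\s)$ by at most $\G(\L)$. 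Since for such $\s$ the set $\cX_\s$ already contains all strictly-lower-energy configurations, this bounds the depth of every non-stable $\s$ by $\G(\L)$, while the depth of the stable configurations is exactly $\G(\L)$; together these give $\G_m\le\G(\L)$.

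The hard part will be this recurrence upper bound, \ie ruling out metastable wells deeper than $\G(\L)$. This is exactly where the geometric and combinatorial description of the mixed-activity hard-core configurations on $\L$ from Section~\ref{sec3} is indispensable: one must show that any configuration trapped in a local minimum of $H$ (a maximal but non-maximum independent set) can be driven downhill in energy by single-site updates that never raise the energy by more than $\G(\L)$, which amounts to controlling the energetic cost of displacing the interfaces between regions occupied according to different components of the tripartition. Once $\G_m=\G(\L)$ is established, both displayed claims of the theorem follow immediately from the general estimates quoted above, yielding in particular the advertised slow mixing at the timescale $e^{\b\G(\L)}$.
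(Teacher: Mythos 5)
Your proposal is correct and follows essentially the same route as the paper: both arguments reduce the theorem to a general result for Metropolis chains that expresses the mixing-time exponent and the spectral-gap scale through a single critical depth of the energy landscape (the paper's Proposition~\ref{prop:modindep3}, quoted from~\cite{NZB16} and ultimately resting on~\cite{C99,Miclo2002}), and then evaluate that depth using the structural properties established in Section~\ref{sec3}. The only difference is in the formulation of the critical constant: the paper uses $\G^*=\max_{\h\neq\bb}\bigl(\Phi(\h,\bb)-H(\h)\bigr)$ and computes it from inequality~\eqref{eq:lowerboundtG} together with Theorem~\ref{thm:structuralproperties}(ii), whereas you use the maximal depth $\G_m=\max_{\s}\bigl(\Phi(\s,\cX_\s)-H(\s)\bigr)$ and compute it from Theorem~\ref{thm:structuralproperties}(ii)--(iii); in this landscape the two quantities coincide (both equal $\G(\L)$, since $\cX_\aa=\{\bb,\cc\}$ and every non-stable $\s$ reaches $\ss\subseteq\cX_\s$ within height $H(\s)+\min\{K,2L\}$), but strictly speaking you should verify that your $\G_m$ is the constant actually appearing in the general results you invoke, rather than asserting it.
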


Therefore, the mixing time turns out to be asymptotically of the same order of magnitude as the tunneling time between stable configurations, establishing the \textit{slow mixing} of $\smash{\xtbb}$ as $\binf$. We remark that mixing times for the hard-core model with Glauber dynamics have received a lot of attention in the literature, see, e.g., \cite{BCFKTVV99,GT04,GT06} and, in particular, \cite{GR07} for results for triangular grids, but, differently from the present paper, these works focus mostly on identifying how the mixing time scales with the graph size at fixed temperature/fugacity.

Different choices for the sizes of the triangular grid or for the boundary conditions result in a fundamentally different geometry of the stable configurations and thus completely change the energy landscape. For example, (i) the $4 \times 6$ triangular grid with open boundary conditions is still a tripartite graph, but has $63$ stable configurations; (ii) the $4\times 4$ and $4\times 5$ triangular grids with periodic boundary conditions are not tripartite anymore (being both $4$-partite graphs) and have $32$ and $10$ stable configurations, respectively; (iii) the $5\times 5$ triangular grid with open boundary conditions is $4$-partite and has a unique stable configuration.
A complete characterization of the stable configurations of these triangular grids and of the (probably heterogeneous) energy barriers separating them seems very involved and is only the first step of the energy landscape analysis, leaving little hope that our results could be easily generalized to such scenarios. 

The rest of the paper is organized as follows. Section~\ref{sec3} is entirely devoted to analysis of geometrical and combinatorial properties of the hard-core configurations on triangular grids and to the derivation of the structural properties of the energy landscape, which will then be used in Section~\ref{newsec4} to prove the two main theorems.

\section{Energy landscape analysis}
\label{sec3}
This section is devoted to the analysis of the energy landscape associated with the hard-core dynamics on the $2K \times 3L$ triangular grid $\L$. Leveraging geometrical features of the hard-core configurations on $\L$, we prove crucial structural properties of the corresponding energy landscape $(\cX,H,Q)$, as stated in Theorem~\ref{thm:structuralproperties} below.

In the rest of this paper, we will use the same notions and notation introduced in~\cite{NZB16}. The connectivity matrix $Q$ given in~\eqref{eq:connectivityfunction_TR} is irreducible, \ie for any pair of configurations $\s,\s'\in \cX$, $\s \neq \s'$, there exists a finite sequence $\o$ of configurations $\o_1,\dots,\o_n \in \cX$ such that $\o_1=\s$, $\o_n=\s'$ and $Q(\o_i,\o_{i+1})>0$, for $i=1,\dots, n-1$. We refer to such a sequence as a \textit{path} from $\s$ to $\s'$ and denote it by $\o: \s \to \s'$. Given a path $\o=(\o_1,\dots,\o_n)$, we define its \textit{height} $\Phi_\o$ as
$
	\Phi_\o:= \max_{i=1,\dots,n} H(\o_i).
$
The \textit{communication height} between a pair of configurations $\s,\s' \in\cX$ is defined as
\[
	\Phi(\s,\s') := \min_{\o : \s\to \s'} \Phi_\o = \min_{\o : \s\to \s'} \max_{i=1,\dots,|\o|} H(\o_i),
\]
and its natural extension to disjoint non-empty subsets $A, B \subset \cX$ is
\[
	\Phi(A,B) := \min_{\s \in A, \, \s' \in B} \Phi(\s,\s').
\]

The next theorem summarizes the structural properties of the energy landscape corresponding to the hard-core dynamics on a triangular grid $\L$. More specifically, (i) we prove that $\aa$, $\bb$ and $\cc$ are the only three stable configurations, (ii) find the value of the communication height between them, as a function of the triangular grid sizes $\vdtr$ and $\hdtr$, and (iii) show by means of two iterative algorithms that there is ``\textit{absence of deep cycles}'' (see condition~\eqref{eq:suffcondPE} below) in the energy landscape $(\cX,H,Q)$.

\begin{thm}[Structural properties of the energy landscape]\label{thm:structuralproperties}
Let $(\cX,H,Q)$ be the energy landscape corresponding to the hard-core dynamics on the $2K \times 3L$ triangular grid $\L$ with $\vdtr \geq 2$ and $\hdtr \geq 1$. Then,
\begin{itemize}
	\item[\textup{(i)}] $ \displaystyle \ss = \{ \aa, \bb, \cc\}$;
	\item[\textup{(ii)}] $\Phi(\aa,\bb) - H(\aa)=\Phi(\aa,\cc) - H(\aa) =\Phi(\bb,\cc) - H(\bb)= \min\{\vdtr, 2\hdtr\}+1$;
	\item[\textup{(iii)}] $\Phi(\s,\{\aa,\bb,\cc\}) - H(\s) \leq \min\{\vdtr, 2\hdtr\} \quad \forall \, \s \in \cX \setminus \{ \aa,\bb,\cc \}$.
\end{itemize}
\end{thm}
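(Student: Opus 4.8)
The plan is to treat the three parts with different tools. For part~\textup{(i)} I would recast the minimization of $H(\s)=-\sum_{v\in\L}\s(v)$ as a maximum independent set problem, since the stable configurations are exactly the hard-core configurations of maximal cardinality. To bound this cardinality I would partition the $N=6\vdtr\hdtr$ sites of $\L$ into $N/3$ vertex-disjoint triangles (the $3$-cliques of $\L$), which the chosen dimensions and periodic boundary conditions make possible; any independent set meets each such triangle in at most one site, so $\sum_{v}\s(v)\le N/3$ for every $\s\in\cX$. By~\eqref{eq:componentsizestriangulargrid} the configurations $\aa,\bb,\cc$ attain this bound and are therefore stable. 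The subtle point is uniqueness: I would argue that a maximum independent set must select exactly one site from each triangle of the partition, and then run a rigidity/propagation argument — using the adjacency constraints between neighboring triangles forced by the periodic structure — to show these choices must be globally consistent, hence that the set coincides with one of the color classes $\AA,\BB,\CC$, i.e.\ with one of $\aa,\bb,\cc$.

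For part~\textup{(ii)} I would first translate communication heights into particle counts. Since $\Phi_\o=-\min_i\sum_{v}\o_i(v)$, one has
\[ \Phi(\aa,\bb)-H(\aa)=\frac{N}{3}-\max_{\o:\aa\to\bb}\ \min_i\sum_{v\in\L}\o_i(v), \]
so the claim is equivalent to showing that along the best path from $\aa$ to $\bb$ the bottleneck configuration falls short of maximal occupancy by exactly $\min\{\vdtr,2\hdtr\}+1$ particles. For the upper bound I would exhibit an explicit reference path that sweeps a one-site-thick conversion front across the torus in the cheaper of the two periodic directions, keeping an $\aa$-pattern on one side of the front and a $\bb$-pattern on the other, so that the number of vacated sites never exceeds $\min\{\vdtr,2\hdtr\}+1$. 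The common value for the pairs $(\aa,\cc)$ and $(\bb,\cc)$ then follows from the symmetry of the tripartition under the automorphisms of $\L$.

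The lower bound in~\textup{(ii)} is where I expect the main difficulty. I would prove that \emph{every} path $\o:\aa\to\bb$ passes through a configuration short of maximal occupancy by at least $\min\{\vdtr,2\hdtr\}+1$ particles. The geometric core is a separation argument: any configuration genuinely interpolating between the $\AA$-pattern and the $\BB$-pattern must contain an empty interface insulating the still-$\aa$-occupied region from the already-$\bb$-occupied region (since an $\AA$-site and a neighboring $\BB$-site cannot both be occupied), and such an interface has to wrap around the torus. Because a wrapping interface contains at least $\min\{\vdtr,2\hdtr\}$ sites, this already forces a deficit of that many vacancies, while a finer analysis of the nucleation step supplies the final $+1$. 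Turning \emph{interface} and \emph{deficit} into precise combinatorial quantities and establishing the wrap-around length bound is the crux of the whole argument, and is presumably where the detailed analysis of mixed-activity configurations deferred to Section~\ref{sec5} enters.

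Finally, for part~\textup{(iii)} I would construct, for each $\s\in\cX\setminus\{\aa,\bb,\cc\}$, a path to $\{\aa,\bb,\cc\}$ whose height exceeds $H(\s)$ by at most $\min\{\vdtr,2\hdtr\}$, by means of the two iterative algorithms advertised in the theorem: a first phase that greedily occupies any vacancy compatible with the hard-core constraint, reaching a maximal configuration while only lowering the energy, followed by a rearrangement phase that drives this maximal configuration toward the nearest color class while removing at most $\min\{\vdtr,2\hdtr\}$ particles at any single step. Since this barrier is strictly below the $\min\{\vdtr,2\hdtr\}+1$ of part~\textup{(ii)}, it certifies that no cycle outside the three stable basins is deep. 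The point requiring the most care is verifying that the rearrangement phase stays within the stated barrier uniformly over all maximal starting configurations.
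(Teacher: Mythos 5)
Your outline has the right overall architecture, and two pieces of it are sound: the triangle-partition bound in (i) correctly gives $\sum_v\s(v)\le N/3=2\vdtr\hdtr$, and your sweeping reference path for the upper bound in (ii) is essentially the paper's construction (Proposition~\ref{prop:refpathaatobb}, built from the energy reduction algorithms). But the points you yourself flag as ``the crux'' are precisely where the proposal has genuine gaps, and in each case the missing ingredient is the same pair of quantitative lemmas the paper proves first (Lemmas~\ref{lem:efficienthorizontalstripes} and~\ref{lem:efficientverticalstripes}): a horizontal stripe carries at most $2\hdtr$ particles, with equality if and only if the configuration restricted to that stripe is a monochromatic bridge; and a vertical stripe of the form $C_{3j}$ containing at least one black particle carries $\vdtr$ particles if and only if it is a black vertical bridge. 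For (i), uniqueness does not require a rigidity/propagation argument over the triangle tiling: maximum occupancy forces a horizontal bridge in every stripe, and bridges of different colors cannot occupy adjacent stripes, so all bridges have the same color; as stated, your propagation step is a placeholder rather than an argument.

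For the lower bound in (ii), your interface argument conflates two different quantities. The relevant deficit is $\DH(\s)=2\vdtr\hdtr-\sum_v\s(v)$, the shortfall from \emph{maximum occupancy}, not the number of empty sites: even $\aa$ itself leaves two thirds of $\L$ vacant, so exhibiting $\min\{\vdtr,2\hdtr\}$ empty sites along a wrapping interface proves nothing about $\DH$ (and the claim that a separating interface must wrap around the torus is itself an unproven topological assertion). The paper instead looks at the \emph{first} configuration $\o_{m^*}$ along the path that displays a black bridge, and shows via the two stripe lemmas and the decomposition $\DH(\s)=\sum_i\DH_{S_i}(\s)=\sum_j\DH_{C_{3j}}(\s)$ that either every horizontal stripe or every vertical stripe of the relevant predecessor has strictly positive deficit, yielding $\DH\ge\min\{\vdtr,2\hdtr\}$; the final $+1$ then comes from a case analysis (vertical bridge only, horizontal bridge only, cross), the cross case requiring a two-step backward look and a pigeonhole argument. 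That is exactly the ``finer analysis of the nucleation step'' you defer, and it is the bulk of the proof. Similarly, in (iii) your greedy-fill-plus-rearrangement scheme is circular at the rearrangement stage: the reason the barrier drops to $\min\{\vdtr,2\hdtr\}$, one below the tunneling barrier, is that a non-stable $\s$ already possesses a deficient stripe, which can be emptied at cost at most $\min\{\vdtr,2\hdtr\}-1$ and then used to seed the column or row sweep at additional cost $1$; asserting that one can ``drive a maximal configuration toward the nearest color class'' within the stated barrier, without identifying this head start, restates the goal rather than proving it.
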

Note that identity (ii) in Theorem~\ref{thm:structuralproperties} motivates the definition~\eqref{def:gamma_tr} of $\G(\L)$ in Theorem~\ref{thm:tabc}. 

We briefly outline here the proof strategy of Theorem~\ref{thm:structuralproperties}, to which is devoted the rest of this section. As illustrated by the state space diagram in Figure~\ref{fig:ss4x6} below, there is not a unique bottleneck separating the stable configurations (this was the case for complete partite graphs~\cite{ZBvL12,ZBvL15}) and there are in fact exponentially many possible ways for the Markov chain $\smash{\xtbb}$ to make such transitions. This makes the task of identifying the energy barrier between stable configurations much harder. 

Inspired by the ideas in~\cite{GR10} and by the methodology used for square grids in~\cite{NZB16} we tackle this problem by looking at geometric features of the hard-core configurations on triangular grids. In Subsection~\ref{sub51}, after some preliminary definitions, we study the combinatorial properties of hard-core configurations on horizontal and vertical \textit{stripes} of the triangular grid $\L$, \ie pairs of adjacent rows (triplets of adjacent columns, respectively). In particular, we find the maximum number of particles that a hard-core configuration can have in a horizontal stripe and characterize how particles are arranged on such stripes in Lemma~\ref{lem:efficientstripes}. Theorem~\ref{thm:structuralproperties}(i) is an almost immediate consequence of these combinatorial results. Afterwards, using further geometrical properties of the hard-core configurations, we prove Proposition~\ref{prop:lowerphiaabb}, which gives the following lower bound for the communication height between $\aa$ and $\bb$:
\[
	\Phi(\aa,\bb) - H(\aa) \geq \min\{\vdtr, 2\hdtr\} +1.
\]
We then introduce two \textit{energy reduction algorithms} in Subsection~\ref{sub52}, which are used in Proposition~\ref{prop:refpathaatobb} to construct a reference path $\o^*: \aa \to \bb$, guaranteeing that the lower bound above is sharp, i.e.,
\[
	\Phi(\aa,\bb) - H(\aa) = \min\{\vdtr, 2\hdtr\} +1,
\]
and concluding the proof of Theorem~\ref{thm:structuralproperties}(ii). The same algorithms are then used again to build a path from every configuration $\s \not\in\{ \aa, \bb, \cc\}$ to the set $\{ \aa, \bb, \cc \}$ with a prescribed energy height, obtaining in this way the inequality stated in Theorem~\ref{thm:structuralproperties}(iii). 
\begin{figure}[!ht]
	\centering
	\includegraphics[scale=0.3]{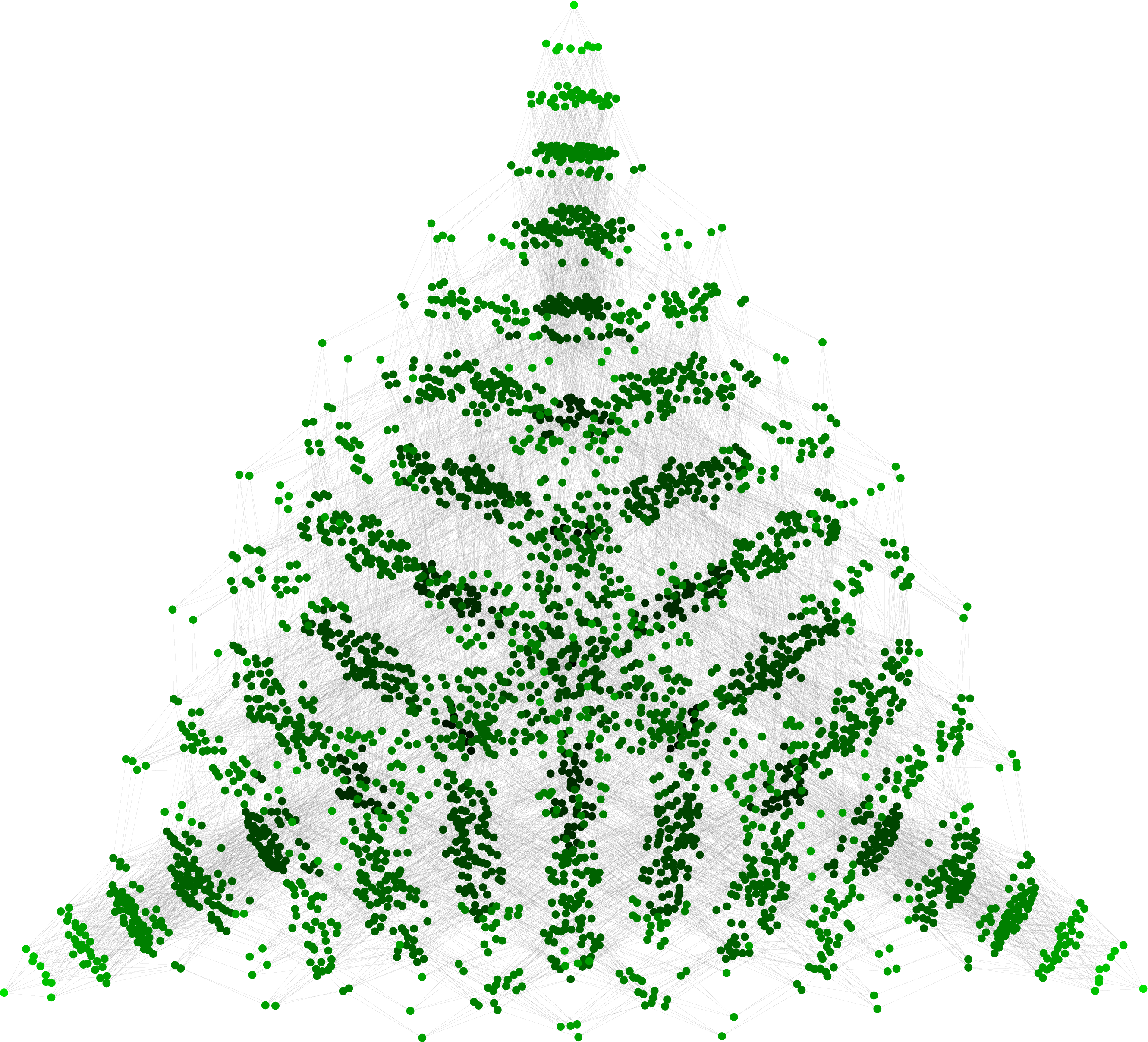}
	\caption{The energy landscape corresponding to the hard-core dynamics on the $4 \times 6$ triangular grid. The color scheme is chosen in such a way that the lighter the color of a node, the lower the energy of the corresponding configuration.}
	\label{fig:ss4x6}
\end{figure}
\FloatBarrier


\subsection{Geometrical properties of hard-core configurations}
\label{sub51}
We first introduce some useful definitions to describe hard-core configurations on the triangular grid $\L$. Denote by $c_j$, $j=0,\dots, 6 \hdtr -1$, the $j$-th column of $\L$, and by $r_i$, $i=0,\dots,2 \vdtr-1$, the $i$-th row of $\L$, see Figure~\ref{fig:stripestriangulargrid}. In the rest of the paper, the row and column indices should always be taken modulo $2K$ and $6L$, respectively.

\begin{figure}[!ht]
	\centering
	\includegraphics{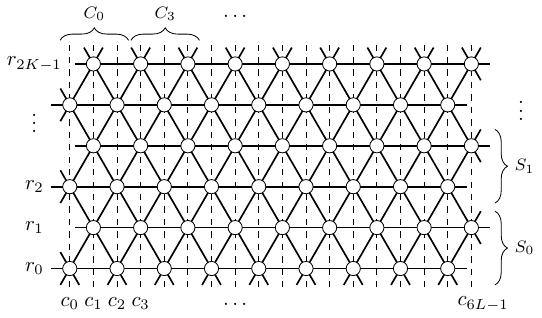}
	\caption{Illustration of row, column and stripe notation for the triangular grid}
	\label{fig:stripestriangulargrid}
\end{figure}

Note that every row has an equal number of sites from each component, since
\begin{equation}
\label{eq:rowsarebalanced}
	|r_i \cap \AA| = |r_i \cap \BB| = |r_i \cap \CC| = \hdtr \qquad \forall \, i=0,\dots,2 \vdtr-1,
\end{equation}
while each column consists of sites from a single component, and, in fact,
\begin{equation}
\label{eq:columnsaremonochromo}
	\AA = \bigcup_{j=0}^{\hdtr-1} c_{3j}, \quad \BB = \bigcup_{j=0}^{\hdtr-1} c_{3j+1}, \quad \text{ and } \quad \CC = \bigcup_{j=0}^{\hdtr-1} c_{3j+2}.
\end{equation}
Each site $v \in V$ lies at the intersection of a row with a column and we associate to $v$ the coordinates $(i,j)$ if $v = r_j \cap c_i$. We call the collection of sites belonging to two adjacent rows a \textit{horizontal stripe}. In particular, we denote by $S_i$, with $i=0,\dots,\vdtr-1$, the horizontal stripe consisting of rows $r_{2i}$ and $r_{2i+1}$, \ie $S_i:=r_{2i} \cup r_{2i+1}$, see Figure~\ref{fig:stripestriangulargrid}. When the index of a stripe is not relevant, we will simply denote it by $S$. We define a \textit{vertical stripe} to be the collection of sites belonging to three adjacent columns, which we denote by $C$ in general. In particular, for $j=0,\dots,3\hdtr-1$ we denote by $C_{j}$ the vertical stripe consisting of columns $c_{j}$, $c_{j+1}$ and $c_{j+2}$, see Figure~\ref{fig:stripestriangulargrid}. For every horizontal stripe $S$ note that $|S|=6 \hdtr$ and~\eqref{eq:rowsarebalanced} implies that 
$
	|S \cap \AA| = |S \cap \BB| = |S \cap \CC| = 2 \hdtr,
$
see also Figure~\ref{fig:Si} where we highlight the tripartition of a horizontal stripe. Similarly, for every vertical stripe $C$, we have $|C|=3 \vdtr$ and, in view of~\eqref{eq:columnsaremonochromo}, we have 
$
	|C \cap \AA| = |C \cap \BB| = |C \cap \CC| = \vdtr.
$
A special role will be played by the vertical stripes whose middle column belongs to $\BB$, which are those of the form $C_{3j}$ for some $j=0,\dots,2\hdtr-1$, whose structure is displayed in Figures~\ref{fig:C3jeven} and~\ref{fig:C3jodd}.

\begin{figure}[!ht]
	\centering
	\subfloat[A horizontal stripe $S_i$ of the $2\vdtr \times 9$ triangular grid\label{fig:Si}]{
	\includegraphics{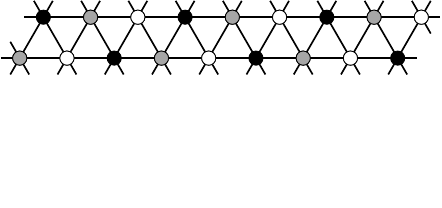}}
	\hspace{0.2cm}
	\subfloat[A vertical stripe $C_{3j}$ for $j$ even of a $8 \times 3 \hdtr$ triangular grid\label{fig:C3jeven}]{
	\includegraphics{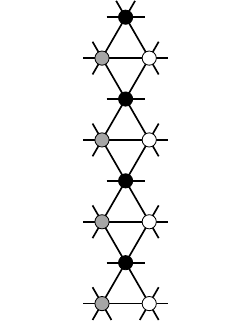}}
	\hspace{0.2cm}
	\subfloat[A vertical stripe $C_{3j}$ for $j$ odd of a $8 \times 3 \hdtr$ triangular grid\label{fig:C3jodd}]{
	\includegraphics{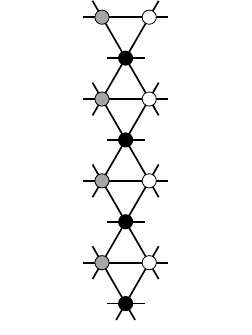}}
	\caption{Illustration of horizontal and vertical stripes in which the sites' tripartition is highlighted using different colors}
	\label{fig:verticalhorizontalstripes}
\end{figure}
Given a hard-core configuration $\s \in \cX$, we define its \textit{energy difference} $\DH(\s)$ as 
\begin{equation}
\label{def:energywastagetr}
	\DH(\s) := H(\s) - H(\aa).
\end{equation}
In view of the fact that $H(\aa)=H(\bb)=H(\cc) = - 2 \vdtr \hdtr$ and the  definition~\eqref{eq:energyfunction_TR} of $H(\cdot)$, we can rewrite
\[
	\DH(\s) = 2 \vdtr \hdtr - \sum_{v \in V} \s(v).
\]
A subset of sites $W \subseteq V$ is said to be \textit{balanced} if $|W \cap \L_\aa| =|W \cap \L_\bb| =|W \cap \L_\cc| $. The energy difference of a configuration $\s \in \cX$ on a balanced subset $W \subseteq V$ is defined as by $\Delta H_W(\s) := |W \cap \L_\aa| - \sum_{v \in W} \s(v)$.
Note that the horizontal and vertical stripes are balanced subsets and that energy difference $\DH(\s)$ in~\eqref{def:energywastagetr} can be written as the sum of the energy differences on non-overlapping horizontal/vertical stripes, \ie
\begin{equation}
\label{eq:energywastagesumtr}
	\DH(\s) = \sum_{i=0}^{\vdtr-1} \DH_{S_i}(\s) = \sum_{j=0}^{2 \hdtr-1} \DH_{C_{3j}}(\s) = \sum_{j=0}^{2 \hdtr-1} \DH_{C_{3j+1}}(\s) = \sum_{j=0}^{2 \hdtr-1} \DH_{C_{3j+2}}(\s).
\end{equation}

We adopt the following coloring convention for displaying a hard-core configuration $\s \in \cX$: We put a node in site $v\in V$ if it is occupied, \ie $\s(v)=1$, and we color it gray, black, or white depending on whether the site $v$ belongs to $\AA$, $\BB$, $\CC$ respectively; if a site $v\in V$ is unoccupied, \ie $\s(v)=0$, we do not display any node there.

There is an equivalent way to represent hard-core configurations on $\L$. Consider the $12 K L$ triangular faces of the graph $\L$, to which we will simply refer as \textit{triangles}. Each triangle can have at most one occupied site in its three vertices (them being a clique of the graph $\L$); if this is the case, then we refer to it as \textit{blocked triangle} and color it as gray, black, or with a dotted pattern, depending on whether such particle belongs to $\AA$, $\BB$ or $\CC$, respectively. Otherwise, if none of its three vertices is occupied by a particle, we call a triangle \textit{free} and leave it blank. In the rest of the paper, we will use a ``mixed'' representation for hard-core configurations on $\L$, displaying both the occupied sites and the corresponding blocked triangles with the aforementioned coloring schemes, see~Figure~\ref{fig:example_new} for an example.
\begin{figure}[!ht]
	\centering
	\includegraphics{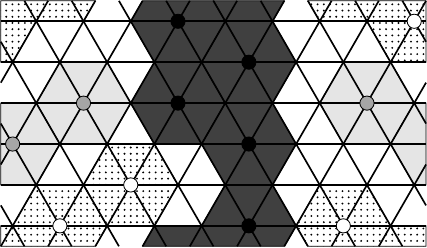}
	\caption{An example of a hard-core configuration $\s$ on the $6 \times 9$ triangular grid}
	\label{fig:example_new}
\end{figure}
Since each site is the vertex of six triangles on $\L$, placing particles with hard-core constraints on a triangular grid corresponds to placing hexagons without overlaps on the same lattice. This is the reason why the hard-core model on the triangular lattice is often called \textit{hard-hexagon model} in the statistical physics literature. 

\begin{rem} A key observation is that blocked triangles sharing an edge must be of the same color. Indeed, as illustrated in Figure~\ref{fig:sharingedges}, if a particle resides in one of the two endpoints of that edge, they trivially are of the same color by construction. Otherwise, there must be a particle in each of the two vertices that are not shared by the two triangles (as they are both assumed to be blocked). It is easy to check that these two vertices always belong to the same partition (cf.~Figure~\ref{fig:sharingedges}), yielding the same coloring for the two triangles under consideration. \vspace{-0.7cm}
\end{rem}
\begin{figure}[!ht]
	\centering
	\subfloat{
	\includegraphics{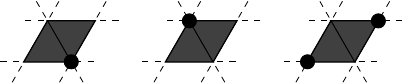}}
	\hspace{1cm}
	\subfloat{
	\includegraphics{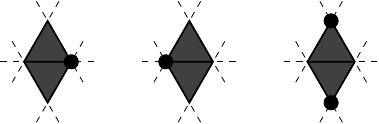}}
	\caption{All possible local hard-core configurations in which two blocked triangles share an edge}
	\label{fig:sharingedges}
\end{figure}
\FloatBarrier
Given two configurations $\s,\s' \in \cX$ and a subset of sites $W \subseteq V$, we write
\[
	\s_{|W} = \s'_{|W} \quad \Longleftrightarrow \quad \s(v) = \s'(v) \quad \forall \, v \in W.
\]
We say that a configuration $\s \in \cX$ has a \textit{horizontal} $\aa$--($\bb$--,$\cc$--)\textit{bridge} in stripe $S$ if $\s$ perfectly agrees there with $\aa$ (respectively $\bb$, $\cc$), \ie
$
	\s_{|S} = \aa_{|S} \, (\text{respectively } \s_{|S} = \bb_{|S} \text{ or } \s_{|S} = \cc_{|S}).
$
Similarly, we say that $\s \in \cX$ has a \textit{vertical} $\aa$--($\bb$--,$\cc$--)\textit{bridge} in stripe $C$ if $\s$ perfectly agrees there with $\aa$ (respectively $\bb$, $\cc$), \ie
$
	\s_{|C} = \aa_{|C} \, (\text{respectively } \s_{|C} = \bb_{|C} \text{ or } \s_{|C} = \cc_{|C}).
$
We informally say that two bridges are of the same \textit{color} when they agree with the same stable configuration. Two examples of bridges are shown in Figures~\ref{fig:blackvertbridge} and~\ref{fig:grayhoribridge}.

\begin{figure}[!ht]
	\centering
	\subfloat[Vertical $\bb$--bridge in $C_9$\label{fig:blackvertbridge}]{
	\includegraphics{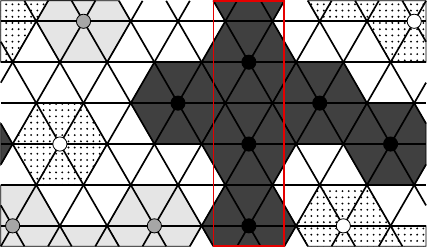}}
	\hspace{0.5cm}
	\subfloat[Horizontal $\aa$--bridge in $S_1$\label{fig:grayhoribridge}]{
	\includegraphics{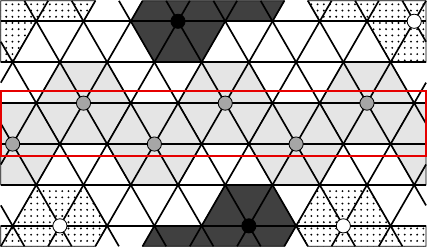}}
	\caption{Examples of hard-core configurations displaying bridges on the $6 \times 9$ triangular grid}
\end{figure}
\FloatBarrier

\begin{lem}[Geometric features of hard-core configurations]\label{lem:crossesaremonochromatic}
A hard-core configuration $\s \in \cX$ cannot display simultaneously a vertical bridge and a horizontal bridge of different colors.
\end{lem}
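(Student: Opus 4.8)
The plan is to derive a contradiction from the geometry of how a horizontal stripe meets a vertical stripe. First I would suppose, for contradiction, that some $\s \in \cX$ simultaneously displays a horizontal bridge of one color in a stripe $S$ and a vertical bridge of a \emph{different} color in a stripe $C$. Writing $X, Y \in \{\aa, \bb, \cc\}$ for the reference configurations associated to these two colors (so that $X \neq Y$), the definitions of horizontal and vertical bridge give $\s_{|S} = X_{|S}$ and $\s_{|C} = Y_{|C}$.

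The crucial step is to examine the overlap $S \cap C$. A horizontal stripe consists of two adjacent rows and a vertical stripe of three adjacent columns, so $S \cap C$ is the set of $2 \times 3 = 6$ sites lying at their intersections. Since by~\eqref{eq:columnsaremonochromo} the columns cycle through $\AA$, $\BB$, and $\CC$ with period three, any three consecutive columns contain exactly one column from each component; hence $S \cap C$ contains exactly two sites from each of $\AA$, $\BB$, and $\CC$. In particular $S \cap C$ is nonempty and meets all three components.

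I would then restrict both bridge identities to $S \cap C$, obtaining $X_{|S \cap C} = \s_{|S \cap C} = Y_{|S \cap C}$. On the other hand, $X$ and $Y$ are the indicator configurations of two distinct components among $\AA$, $\BB$, $\CC$, so at any site belonging to one of these two components they take different values (one places a particle there while the other does not). Since $S \cap C$ meets every component, such a site exists, whence $X_{|S \cap C} \neq Y_{|S \cap C}$, a contradiction.

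I do not expect any serious obstacle here: the only point requiring care is the bookkeeping that any three consecutive columns of $\L$ realize all three colors, which is immediate from the column description~\eqref{eq:columnsaremonochromo} of the tripartition (and note that, thanks to the periodic boundary conditions, a horizontal and a vertical stripe always overlap, so $S \cap C$ is genuinely nonempty). The argument then applies verbatim whatever the two colors of the bridges are, since it only uses that the two associated reference configurations disagree somewhere on the nonempty overlap $S \cap C$.
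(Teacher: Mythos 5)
Your proof is correct and takes essentially the same approach as the paper: both arguments pin down sites in the intersection of the two stripes where the two reference configurations must disagree (the paper phrases this via the vertical bridge forcing vacancies in every row at sites the horizontal bridge would need occupied). Your version is, if anything, slightly cleaner in that it uses only the definition of a bridge as full agreement on the stripe, without invoking the hard-core blocking argument.
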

\begin{proof}
Assume without loss of generality that the vertical bridge is a $\bb$--bridge. Such a vertical bridge blocks two sites on every row, belonging to $\AA$ and $\CC$, and thus no horizontal stripe can fully agree with $\aa$ or $\cc$.
\end{proof}
It is possible, however, that a vertical and a horizontal bridges coexist when they are of the same color and this fact motivates the next definition. We say that a configuration $\s \in \cX$ has a $\aa$--($\bb$--,$\cc$--)\textit{cross} if it has simultaneously at least two $\aa$--($\bb$--,$\cc$--)bridges, one vertical and one horizontal; see Figure~\ref{fig:crosstr} for an example of a $\bb$--cross.


\begin{figure}[!ht]
	\centering
	\includegraphics{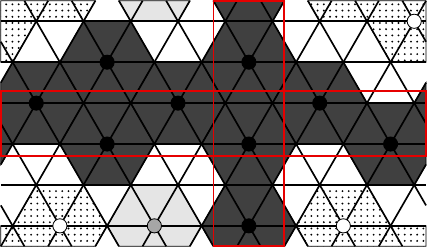}
	\caption{Example of a hard-core configuration displaying a $\bb$--cross on the $6 \times 9$ triangular grid}
	\label{fig:crosstr}
\end{figure}
\FloatBarrier

In order to prove Theorem~\ref{thm:structuralproperties}(ii), we need the following lemma which characterizes the structure of horizontal and vertical stripes with zero energy difference.

\begin{lem}[Energy-efficient stripes structure]\label{lem:efficientstripes}
Let $\s \in \cX$  be a hard-core configuration on the $2K \times 3L$ triangular grid. The following statements hold:
\begin{itemize}
\item[\textup{(i)}] For every horizontal stripe $S$, the energy difference is non-negative, \ie $\DH_{S}(\s) \geq 0$, and
\begin{equation}
\label{eq:efficienthorizontalstripes}
	\DH_{S}(\s)=0 \quad \Longleftrightarrow \quad \s \text{ has a horizontal bridge in stripe } S;
\end{equation}
\item[\textup{(ii)}] For every vertical stripe $C$ of the form $C=C_{3j}$, the energy difference is non-negative, \ie $\DH_{C}(\s) \geq 0$. Furthermore, if $\s$ has at least one black particle on $C$, i.e., $\sum_{v \in C \cap \BB} \s(v) >0$, then
\begin{equation}
\label{eq:verticalstripeenergywastage}
	\DH_{C}(\s)=0 \quad \Longleftrightarrow \quad \s \text{ has a vertical $\bb$--bridge in stripe } C.
\end{equation}
\end{itemize}
\end{lem}
\begin{proof}
In this proof we leverage the equivalent representation of a hard-core configuration as collection of blocked triangles. The underlying idea for horizontal and vertical stripes is the same, but we present the proof separately in view of their different structures.

(i) Given $\s \in \cX$, denote by $b_S(\s)$ and $f_S(\s)$ the number of blocked triangles and of free triangles on the horizontal stripe $S$, respectively. Since the total number of triangles of the horizontal stripe $S$ is $6 \hdtr$, we have $b_S(\s) + f_S(\s) = 6 \hdtr$. Furthermore, as each particle blocks exactly $3$ triangles on the horizontal stripe $S$, it holds that $b_S(\s)=3 \sum_{v \in S} \s(v)$ and, thus,
\begin{equation}
\label{eq:freetriangleUS}
	f_S(\s) = 6 \hdtr - b_S(\s) = 3 \Big ( 2 \hdtr - \sum_{v \in S} \s(v) \Big ) = 3 \cdot \DH_S(\s).
\end{equation}
Since $f_S(\s)$ is by construction a non-negative integer, it readily follows that  $\DH_S(\s) \geq 0$.

Let us now turn to the characterization~\eqref{eq:efficienthorizontalstripes} of the horizontal stripes with energy difference equal to zero. 
If $\DH_S(\s)=0$, identity~\eqref{eq:freetriangleUS} gives that $f_S(\s)=0$, and thus $S$ has no free triangles. In view of Remark~1 and leveraging the fact that each triangle in $S$ shares edges with two neighboring triangles, it follows by finite induction that all triangles in $S$ all are of the same color and, hence, either $\s_{|S} = \aa_{|S}$ or $\s_{|S} = \bb_{|S}$ or $\s_{|S} = \cc_{|S}$.
To prove the converse direction, note that if $\DH_{S}(\s)>0$, then also $f_S(\s) > 0$, \ie there is at least one free triangle on $S$. Consider the sites corresponding to the three vertices of any such free triangle. By construction they all must be unoccupied and, belonging each to a different partition of $\L$, it follows that $\s_{|S} \neq \aa_{|S},\bb_{|S},\cc_{|S}$.\\
\begin{figure}[!ht]
	\centering
	\subfloat{\includegraphics[scale=1]{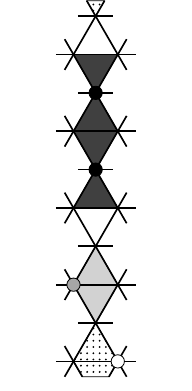}}
	\caption{Hard-core configuration and corresponding block triangles in a vertical stripe of the form $C_{3j}$}
	\label{fig:verticalhorizontalstripesequivalence}
\end{figure}

(ii) Consider a vertical stripe $C$ of the form $C_{3j}$, whose middle column is a subset of $\BB$, see Figure~\ref{fig:verticalhorizontalstripesequivalence}. Analogously to (i), denote by $b_C(\s)$ and $f_C(\s)$ the number of blocked triangles and of free triangles fully contained in the vertical stripe $C$, respectively. There are $2K$ such triangles in total and thus $b_C(\s)+f_C(\s)=2K$. Any particle, regardless of which column/partition it belongs to, blocks exactly two of these triangles, as illustrated in Figure~\ref{fig:verticalhorizontalstripesequivalence}, so that $b_C(\s)=2 \sum_{v \in C} \s(v)$ and, hence,
\begin{equation}
\label{eq:freetriangleUC}
	f_C(\s) = 2 \vdtr - b_C(\s)= 2 \Big ( \vdtr - \sum_{v \in C} \s(v) \Big ) = 2 \cdot \DH_C(\s).
\end{equation}
The latter identity readily implies that $\DH_C(\s) \geq 0$, since $f_C(\s)$ is by construction a non-negative integer.

If $\DH_{C}(\s)>0$, then it follows from~\eqref{eq:freetriangleUC} that there is at least one free triangle fully contained in stripe $C$. The three sites of any such free triangle, each belonging to a different partition of $\L$, must be all unoccupied and thus there cannot be a vertical bridge on $C$. For the reverse implication we argue as follows. By assumption there is at least one black particle and, therefore, two blocked black triangles on $C$. If $\DH_{C}(\s)=0$, then there are$f_C(\s)=0$ free triangles on $C$ in view of~\eqref{eq:freetriangleUC}. Remark 1 states that blocked triangles sharing an edge must be of the same color (cf.~the rightmost case of Figure~\ref{fig:sharingedges}), it readily follows by finite induction that all the triangles on $C$ are black, which means that all the sites in column $c_{3j+1}$ must be occupied, yielding $\s_{|C} \equiv \bb_{|C}$. \qedhere
\end{proof}
\FloatBarrier
We are now ready to state and prove the lower bound on the communication height between any pair of stable configurations.

\begin{prop}[Lower bound on the communication height between $\aa$ and $\bb$]\label{prop:lowerphiaabb}
The communication height between $\aa$ and $\bb$ in the energy landscape corresponding to the hard-core model on the $2K \times 3L$ triangular grid satisfies the following inequality
\[
	\Phi(\aa,\bb) - H(\aa) \geq \min\{\vdtr, 2\hdtr\}+1.
\]
\end{prop}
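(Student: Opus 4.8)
The plan is to establish the lower bound in the equivalent form of a connectivity statement: I will show that $\aa$ and $\bb$ lie in different connected components of the sublevel set $\cY := \{\s \in \cX : \DH(\s) \le \min\{\vdtr,2\hdtr\}\}$, where $\DH(\s)=H(\s)-H(\aa)$ as in~\eqref{def:energywastagetr}. Since every path $\o:\aa\to\bb$ must then leave $\cY$, it visits a configuration $\s$ with $\DH(\s)\ge \min\{\vdtr,2\hdtr\}+1$, and because $\Phi(\aa,\bb)=\min_{\o:\aa\to\bb}\Phi_\o$ this gives exactly $\Phi(\aa,\bb)-H(\aa)\ge \min\{\vdtr,2\hdtr\}+1$. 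The workhorse throughout is the additive decomposition of $\DH$ along horizontal and vertical stripes in~\eqref{eq:energywastagesumtr}, combined with Lemmas~\ref{lem:efficienthorizontalstripes},~\ref{lem:efficientverticalstripes} and~\ref{lem:crossesaremonochromatic}.

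First I would isolate the two geometric estimates that make an ``interface'' costly. \textbf{(a)} If $\s$ has no horizontal bridge at all, then $\DH_{S_i}(\s)\ge 1$ for every $i$ by Lemma~\ref{lem:efficienthorizontalstripes}, hence $\DH(\s)\ge \vdtr$. \textbf{(b)} If $\s$ carries two horizontal bridges of different colors, then $\DH(\s)\ge 2\hdtr$: say $\s$ has a gray and a black horizontal bridge; then every $\BB$-centered vertical stripe $C_{3j}$ meets both bridges, so it contains a gray and a black particle, is therefore not a black vertical bridge, and $\DH_{C_{3j}}(\s)\ge 1$ by Lemma~\ref{lem:efficientverticalstripes}; summing the $2\hdtr$ terms in~\eqref{eq:energywastagesumtr} gives the claim. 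The gray/white and black/white cases are identical, using the $\AA$- and $\CC$-centered families in~\eqref{eq:energywastagesumtr} and the color-symmetric analogues of Lemma~\ref{lem:efficientverticalstripes}. Perpendicular mismatched bridges are excluded outright by Lemma~\ref{lem:crossesaremonochromatic}.

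Next comes the topological crossing that forces an interface onto every path. I would track the indicators $G(\s)=\mathds{1}\{\s \text{ has a gray horizontal bridge}\}$ and $\bar G(\s)=\mathds{1}\{\s \text{ has a black or white horizontal bridge}\}$, noting $(G,\bar G)(\aa)=(1,0)$ and $(G,\bar G)(\bb)=(0,1)$. The pair cannot jump directly from $(1,0)$ to $(0,1)$ in one step: such a move would have to destroy every gray bridge and at the same time create a non-gray bridge, but a single-site update alters bridges only in the one stripe it touches, and within one stripe a gray bridge and a non-gray bridge occupy disjoint sets of sites, so they cannot be exchanged by a single update. Hence the path must visit a configuration with $(G,\bar G)=(0,0)$, which is bridgeless and so has $\DH\ge \vdtr$ by (a), or with $(G,\bar G)=(1,1)$, which carries two differently colored horizontal bridges and so has $\DH\ge 2\hdtr$ by (b). In either case $\DH\ge \min\{\vdtr,2\hdtr\}$, establishing the bound with the additive constant replaced by $0$.

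The remaining and genuinely delicate point is to gain the final unit, i.e.\ to preclude that $\aa$ and $\bb$ are joined \emph{inside} $\cY$ by a chain of minimal-cost interface configurations. Here I would show that the threshold configurations attaining $\DH=\min\{\vdtr,2\hdtr\}$ cannot act as color-switching links: a bridgeless configuration of cost exactly $\vdtr$ that is adjacent both to a gray-bridge and to a black-bridge configuration of $\cY$ must simultaneously contain an ``almost gray'' stripe (a gray bridge minus one particle) and an ``almost black'' stripe, and feeding these into the vertical decomposition~\eqref{eq:energywastagesumtr} forces $\DH\ge 2\hdtr-2$, incompatible with cost $\vdtr$ unless $\vdtr$ lies within $2$ of $2\hdtr$; the complementary estimate rules out two-colored threshold configurations. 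This reduces the whole claim to the boundary regime $\vdtr\approx 2\hdtr$, which I expect to be the main obstacle and which would have to be closed with the finer hard-hexagon description of a single-defect stripe coming from Lemmas~\ref{lem:efficienthorizontalstripes} and~\ref{lem:efficientverticalstripes}. Once the bridge-color invariant is shown to be preserved across $\cY$, the desired inequality $\Phi(\aa,\bb)-H(\aa)\ge \min\{\vdtr,2\hdtr\}+1$ follows.
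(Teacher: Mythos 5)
Your argument up to the constant $\min\{\vdtr,2\hdtr\}$ is sound and is a genuinely different route from the paper's: the sublevel-set disconnection via the pair of horizontal-bridge indicators, together with your estimates (a) (bridgeless $\Rightarrow \DH\ge \vdtr$ via Lemma~\ref{lem:efficienthorizontalstripes}) and (b) (two differently colored horizontal bridges $\Rightarrow \DH\ge 2\hdtr$ via Lemma~\ref{lem:efficientverticalstripes} applied to the $2\hdtr$ disjoint $\BB$-centered vertical stripes), correctly forces every path $\aa\to\bb$ through a configuration with $\DH\ge\min\{\vdtr,2\hdtr\}$. But the proposition's entire content is the additional $+1$: Theorem~\ref{thm:structuralproperties}(iii) shows every \emph{non-stable} configuration has barrier at most $\min\{\vdtr,2\hdtr\}$, so without the extra unit the stable configurations would not be distinguished from the rest of the landscape and neither condition~\eqref{eq:suffcondPE} nor~\eqref{eq:suffcondAE} would follow. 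Your last paragraph does not close this. In the regime $\vdtr\le 2\hdtr$ your own estimate ($\DH\ge 2\hdtr-2$ for a bridgeless threshold configuration adjacent to both a gray-bridge and a black-bridge neighbor) leaves the cases $\vdtr\in\{2\hdtr-2,2\hdtr-1,2\hdtr\}$ open, as you acknowledge; and in the regime $\vdtr>2\hdtr$ the switching must occur at a two-colored configuration at level exactly $2\hdtr$, for which you offer no argument at all --- the claim that ``the complementary estimate rules out two-colored threshold configurations'' is unsupported, and such configurations at level exactly $2\hdtr$ are not obviously excluded. So the proof as written establishes only $\Phi(\aa,\bb)-H(\aa)\ge\min\{\vdtr,2\hdtr\}$.

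For comparison, the paper obtains the extra unit by a different bookkeeping device: it looks at the \emph{first} configuration $\o_{m^*}$ along the path that displays a black bridge (horizontal \emph{or} vertical --- note your invariant ignores vertical bridges, which is why you never see the move that actually creates $\bb$'s structure). If $\o_{m^*}$ has only a vertical (resp.\ only a horizontal) black bridge, then Lemma~\ref{lem:crossesaremonochromatic} forbids any horizontal (resp.\ vertical) bridge, so $\DH(\o_{m^*})\ge \vdtr$ (resp.\ $\ge 2\hdtr$); since the bridge-creating move is the \emph{addition} of a black particle, the preceding configuration satisfies $\DH(\o_{m^*-1})=\DH(\o_{m^*})+1$, which is exactly where the $+1$ comes from. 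The remaining case of a black cross is settled by going one step further back: if $\DH(\o_{m^*-1})=\min\{\vdtr,2\hdtr\}$ and $\DH(\o_{m^*-2})=\min\{\vdtr,2\hdtr\}-1$, then $\o_{m^*-2}$ still has a black particle in every horizontal and vertical stripe, and the pigeonhole principle applied to the decomposition~\eqref{eq:energywastagesumtr} produces a stripe with zero energy difference, hence (by Lemmas~\ref{lem:efficienthorizontalstripes} and~\ref{lem:efficientverticalstripes}) a black bridge before time $m^*$, a contradiction. Some version of this ``look at the move that creates the bridge'' step, or an equivalently sharp analysis of the threshold configurations, is what your proposal is missing.
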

\begin{proof}
We will show that in every path $\o: \aa \to \bb$ there exists at least one configuration with energy difference greater than or equal to $\min\{\vdtr, 2\hdtr\} +1$.
Consider a path $\o=(\o_1,\dots,\o_n)$ from $\aa$ to $\bb$. Without loss of generality, we may assume that there are no void moves in $\o$, \ie at every step either a particle is added or a particle is removed, so that $H(\o_{i+1}) = H(\o_{i}) \pm 1$ for every $1 \leq i \leq n-1$. Since configuration $\aa$ has no $\bb$--bridges, while $\bb$ does, at some point along the path $\o$ there must be a configuration which is the first to display a $\bb$--bridge, that is a column or a row occupied only by black particles. Let $m^*$ be the index corresponding to such configuration, \ie
\[
	m^*:=\{ m \leq n ~|~ \exists \, i ~:~ (\o_m)_{|r_i} = \bb_{|r_i} \quad \mathrm{ or } \quad \exists \, j ~:~ (\o_m)_{|c_j} = \bb_{|c_j} \}.
\]
Since a $\bb$--bridge cannot be created in only two steps starting from $\aa$, we must have $m^*>2$. We claim that
\[
	\max \{ \DH(\o_{m^*-1}),\DH(\o_{m^*-2}) \}\geq \min\{\vdtr, 2\hdtr\}+1.
\]
Since the addition of a single black particle cannot create more than one bridge in each direction, it is enough to consider the following three cases:
\begin{itemize}
	\item[\textup{(a)}] $\o_{m^*}$ displays a vertical $\bb$--bridge only;
	\item[\textup{(b)}] $\o_{m^*}$ displays a horizontal $\bb$--bridge only;
	\item[\textup{(c)}] $\o_{m^*}$ displays a $\bb$--cross.
\end{itemize}

For case (a), note that configuration $\o_{m^*}$ does not have any horizontal bridge. Indeed, it cannot have a horizontal $\bb$--bridge, otherwise we would be in case (c), and any horizontal $\aa$-- or $\cc$-- bridge cannot coexist with the vertical $\bb$--bridge, in view of Lemma~\ref{lem:crossesaremonochromatic}. Hence, the energy difference of every horizontal stripe is strictly positive, thanks to Lemma~\ref{lem:efficientstripes}(i), and thus
\[
	\DH(\o_{m^*}) = \sum_{i=0}^{\vdtr-1} \DH_{S_i}(\o_{m^*}) \geq \vdtr.
\]
Furthermore, configurations $\o_{m^*-1}$ and $\o_{m^*}$ differ in a unique site $v^* \in \BB$, which is such that $\o_{m^*-1}(v^*)=0$ and $\o_{m^*}(v^*)=1$. Hence, $\DH(\o_{m^*-1})=\DH(\o_{m^*})+1$ and thus
\[
	\DH(\o_{m^*-1}) \geq \vdtr +1.
\]

The argument for case (b) is similar to that of case (a). Firstly, configuration $\o_{m^*}$ does not display any vertical bridge. Lemma~\ref{lem:crossesaremonochromatic} implies that there cannot be any vertical $\aa$-- or  $\cc$--bridge due to the presence of a horizontal $\bb$--bridge, while a vertical $\bb$--bridge cannot exist, otherwise there would be a $\bb$--cross and we would be in case (c). Every vertical stripe has at least one black particle, due to the presence of a horizontal $\bb$--bridge. Hence, $\DH_{C_j}(\o_{m^*})\geq 1$ for every $j=0,\dots,2 \hdtr-1$ in view of Lemma~\ref{lem:efficientstripes}(ii). Therefore,
\[
	\DH(\o_{m^*}) = \sum_{j=0}^{2 \hdtr-1} \DH_{C_j}(\o_{m^*}) \geq 2\hdtr.
\]
From this inequality it follows that $\DH(\o_{m^*-1}) \geq 2 \hdtr +1$, because, as for case (a), the definition of $m^*$ implies $\DH(\o_{m^*-1})=\DH(\o_{m^*})+1$.

Consider now case (c), in which $\o_{m^*}$ displays a $\bb$--cross. The presence of both a vertical and a horizontal $\bb$--bridge means that $\o_{m^*}$ has a black particle in every vertical and horizontal stripe. This property is inherited by the configuration $\o_{m^*-1}$, since it differs from $\o_{m^*}$ only by the removal of the black particle lying at the intersection of the vertical and horizontal bridge constituting the cross. Furthermore, by definition of $m^*$, configuration $\o_{m^*-1}$ cannot have any $\bb$--bridge, neither vertical nor horizontal. These two facts, in combination with Lemma~\ref{lem:efficientstripes}, imply that
\[
	\DH(\o_{m^*-1}) \geq \min\{ \vdtr, 2 \hdtr\}.
\]
If $\DH(\o_{m^*-1}) \geq \min\{ \vdtr, 2 \hdtr\} +1$, then the proof is completed. Otherwise, the energy difference of configuration $\o_{m*-1}$ is $\DH(\o_{m^*-1}) = \min\{ \vdtr, 2 \hdtr\}$. The configuration preceding $\o_{m^*-1}$ in the path $\o$ satisfies
\begin{equation}
\label{eq:omstarminus2pm}
	\DH(\o_{m^*-2}) = \min\{ \vdtr, 2 \hdtr\} \pm 1,
\end{equation}
since it differs from $\o_{m^*-1}$ by a single site update. Suppose first that 
\begin{equation}
\label{eq:omstarminus2tr}
	\DH(\o_{m^*-2}) = \min\{ \vdtr, 2 \hdtr\} - 1.
\end{equation}
This means that $\o_{m^*-2}$ differs from $\o_{m^*-1}$ by the addition of a particle. Therefore, also configuration $\o_{m^*-2}$ has at least one black particle in every horizontal stripe, \ie
\begin{equation}
\label{eq:oneblackparticlehorizontal}
	\sum_{v \in S_i \cap \BB} \o_{m^*-2}(v) \geq 1 \quad \forall \, i =0,\dots, \vdtr,
\end{equation}
and at least one black particle in every vertical stripe, \ie
\begin{equation}
\label{eq:oneblackparticlevertical}
	\sum_{v \in C_j \cap \BB} \o_{m^*-2}(v) \geq 1 \quad \forall \, j =0,\dots, 2\hdtr-1. 
\end{equation}
If $\vdtr \leq 2 \hdtr$, \eqref{eq:omstarminus2tr} and the pigeonhole principle imply that there must be a horizontal stripe $S$ such that $\DH_S(\o_{m^*-2})=0$. In view of~\eqref{eq:oneblackparticlehorizontal} and Lemma~\ref{lem:efficientstripes}(i), $\o_{m^*-2}$ must have a horizontal $\bb$--bridge in $S$, which contradicts the definition of $m^*$.
When instead $\vdtr > 2 \hdtr$, it follows from~\eqref{eq:omstarminus2tr} that there must be a vertical stripe $C$ such that $\DH_C(\o_{m^*-2})=0$. Also in this case, \eqref{eq:oneblackparticlevertical} and Lemma~\ref{lem:efficientstripes}(ii) imply that $\o_{m^*-2}$ displays a vertical $\bb$--bridge in $C$, in contradiction with the definition of $m^*$. We have in this way proved that assumption~\eqref{eq:omstarminus2tr} always leads to a contradiction, so in view of~\eqref{eq:omstarminus2pm} we have $\DH(\o_{m^*-2}) = \min\{ \vdtr, 2 \hdtr \} + 1$ and the proof is concluded also for case (c). 
\end{proof}

\subsection{Reference path and absence of deep cycles}
\label{sub52}
In this subsection we describe an iterative procedure that constructs a path from a suitable initial configuration to a target stable configuration. We will refer to it as \textit{energy reduction algorithm} since the yielded path $\o$ brings the initial configuration $\s$ to a configuration with lower (in particular, minimum) energy while guaranteeing that the energy along the path will never exceed the initial value plus one or two, depending on the structure of the initial configuration.
These two algorithmic procedures modify the initial configuration using only moves allowed by the hard-core dynamics (i.e., single-site updates) and increasingly grow a uniform cluster (aligned with the target configuration) proceeding either row by row or column by column.
These two variations, despite being similar in spirit, will be described separately, since the structure of horizontal and vertical stripes of the triangular grid is fundamentally different. Nonetheless, 
the core mechanisms of both these algorithms is the same: orderly add particles aligned with the target configuration and, if necessary, remove the particles on the other two partitions that block the growth of such a cluster.
In order for $\s$ to be a suitable starting configuration for the energy reduction algorithm, $\s$ should have ``enough room'' for such a cluster to be created, condition that is guaranteed when all the occupied sites in two adjacent rows (or columns) belong to the same partition.

Such energy reduction algorithms will be used in Proposition~\ref{prop:refpathaatobb} to construct the reference path from $\aa$ to $\bb$ and to show the absence of deep cycle in the state space $\cX$, the crucial step in the proof of Theorem~\ref{thm:structuralproperties}(iii).


\subsubsection*{Energy reduction algorithm by rows}
We now describe in detail the energy reduction algorithm by rows with $\bb$ as target configuration. In order for $\s \in \cX$ to be a suitable initial configuration for this iterative procedure, we require that $\s$ has no gray or white particles in the first horizontal stripe $S_0 = r_0 \cup r_1$, \ie
\begin{equation}
\label{eq:initialconditionRAR}
	\s(v) = 0 \quad \forall \, v \in S_0 \cap (\AA \cup \CC).
\end{equation}
Figure~\ref{fig:firststripefree} shows a hard-core configuration that satisfies this initial condition.
\begin{figure}[!ht]
	\centering
	\includegraphics[scale=0.95]{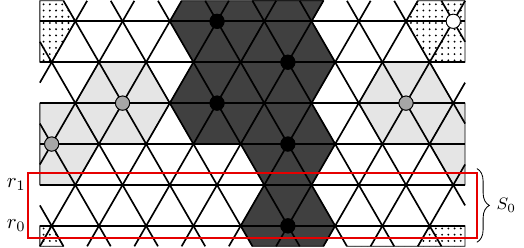}
	\caption{Example of a hard-core configuration on the $6 \times 9$ triangular grid that satisfies~\eqref{eq:initialconditionRAR}}
	\label{fig:firststripefree}
\end{figure}
\FloatBarrier
The output of this algorithm is a path $\o$ from $\s$ to $\bb$, which we construct as the concatenation of $2\vdtr$ paths $\o^{(1)},\dots,\o^{(2\vdtr)}$. For every $i=1,\dots,2\vdtr$, path $\o^{(i)}$ goes from $\s_i$ to $\s_{i+1}$, where we set $\s_{1}:=\s$, $\s_{2K+1}:=\bb$ and define for $i=2,\dots,2\vdtr$
\[
	\s_{i}(v):=
	\begin{cases}
		\bb(v) 	& \text{ if } v \in r_1,\dots,r_{i-1},\\
		0 			& \text{ if } v \in r_{i} \cap (\AA \cup \CC),\\
		\s(v) 	& \text{ if } v \in r_{i} \cap \BB \text{ or } v \in r_{i+1},\dots,r_{2K-1}.\\
	\end{cases}
\]
We now describe in detail how to construct each of the paths $\o^{(i)}$ for $i=1,\dots,2\vdtr$. Each path $\o^{(i)}=(\o^{(i)}_{1}, \dots, \o^{(i)}_{2\hdtr+1})$ comprises $2\hdtr+1$ moves (but possibly void) and is such that $\o^{(i)}_{1}=\s_i$ and $\o^{(i)}_{2\hdtr+1}=\s_{i+1}$. We start from configuration $\smash{\o^{(i)}_0=\s_i}$ and we repeat iteratively the following procedure for all $j=1,\dots,2\hdtr$:
\begin{itemize}
	\item If $j \equiv 1 \pmod 2$, consider the pair of sites $v \in \AA$ and $v'\in \CC$ defined by
	\[
	\begin{cases}
		v=(i+1, 3j), \quad \quad v'=(i+1,3j+ 2)		& \text{ if } i \equiv 0 \pmod 2,\\
		v=(i+1, 3j-3), \, \, v'=(i+1,3j -1)			 		& \text{ if } i \equiv 1 \pmod 2.
	\end{cases}
	\]
	Note that the two sites $v$ and $v'$ are always neighbors, so that only one of the two can be occupied.
	\begin{itemize}
		\item[-] If $\o^{(i)}_{j}(v)=0=\o^{(i)}_{j}(v')$, we set $\o^{(i)}_{j+1}=\o^{(i)}_{j}$, so $H(\o^{(i)}_{j+1}) = H(\o^{(i)}_{j})$.
		\item[-] If $\smash{\o^{(i)}_{j}(v)=1}$ or $\smash{\o^{(i)}_{j}(v')=1}$, then we remove from configuration $\o^{(i)}_{j}$ the particle in the unique occupied site between $v$ and $v'$, increasing the energy by $1$ and obtaining in this way configuration $\smash{\o^{(i)}_{j+1}}$, which is such that $\smash{H(\o^{(i)}_{j+1}) = H(\o^{(i)}_{j})+1}$.
	\end{itemize}
	\item If $j \equiv 0 \pmod 2$, consider the site $v \in \BB$ defined as
	\[
		v=
		\begin{cases}
			(i,3j -2)		& \text{ if } i \equiv 0 \pmod 2,\\
			(i,3j -5)		& \text{ if } i \equiv 1 \pmod 2.
		\end{cases}
	\]	
	\begin{itemize}
		\item[-] If $\o^{(i)}_{j}(v)=1$, we set $\o^{(i)}_{j+1}=\o^{(i)}_{j}$ and thus $H(\o^{(i)}_{j+1}) = H(\o^{(i)}_{j})$.
		\item[-] If $\o^{(i)}_{j}(v)=0$, then we add to configuration $\o^{(i)}_{j}$ a particle in site $v$ decreasing the energy by $1$. We obtain in this way a configuration $\smash{\o^{(i)}_{j+1}}$, which is a hard-core configuration because by construction all the first neighboring sites of $v$ are unoccupied. In particular, the two particles residing in the two sites above $v$ may have been removed exactly at the previous step. The new configuration has energy $\smash{H(\o^{(i)}_{j+1}) = H(\o^{(i)}_{j})-1}$.
	\end{itemize}
\end{itemize}
The way the path $\o^{(i)}$ is constructed shows that
$
	H(\s_{i+1}) \leq H(\s_i)
$
for every $i=1,\dots,2\vdtr$, since the number of particles added in row $r_i$ is greater than or equal to the number of particles removed in row $r_{i+1}$. Moreover,
$
	\Phi_{\o^{(i)}} \leq H(\s_i) + 1,
$
since along the path $\o^{(i)}$ every particle removal (if any) is always followed by a particle addition. These two properties imply that the path $\o: \s \to \bb$ created by concatenating $\o^{(1)},\dots,\o^{(2\vdtr)}$ satisfies
\[
	\Phi_{\o} \leq H(\s) + 1.
\]

Note that the energy reduction algorithm by rows can be tweaked in order to have either $\aa$ or $\cc$ as target configuration. In particular, the condition~\eqref{eq:initialconditionRAR} for the initial configuration $\s$ should be adjusted accordingly, requiring that $\s$ has no black and white (black and gray, respectively) particles in the first horizontal stripe $S_0$, depending on whether the target configuration is $\aa$ or $\cc$, respectively.

\subsubsection*{Energy reduction algorithm by columns}
We now illustrate how the energy reduction algorithm by columns works choosing $\bb$ as target configuration. Note that the procedure we are about to describe can be tweaked to yield a path with target configuration $\aa$ or $\cc$, but we omit the details. If the target configuration is $\bb$, we require that the initial configuration $\s \in \cX$ has no particles on columns $c_2$ and $c_3$, namely
\begin{equation}
\label{eq:initialconditionRAC}
	\s(v) = 0 \quad \forall \, v \in c_{2} \cup c_{3}.
\end{equation}
Since $c_{2} = C_0 \cap \CC$ and $c_{3} = C_1 \cap \AA$, condition~\eqref{eq:initialconditionRAC} requires there are no white particles in $C_0$ and no gray particles in $C_{1}$. Figure~\ref{fig:c2c3free} shows a hard-core configuration that satisfies this initial condition.
\begin{figure}[!ht]
	\centering
	\includegraphics[scale=1]{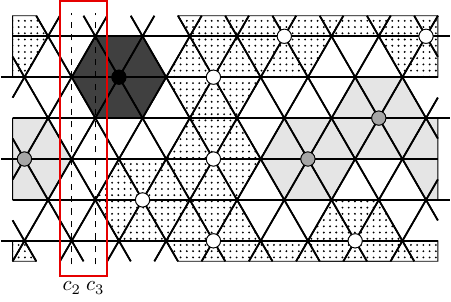}
	\vspace{-0.1cm}
	\caption{Example of hard-core configuration on the $6 \times 9$ triangular grid that satisfies~\eqref{eq:initialconditionRAC}}
	\label{fig:c2c3free}
	\vspace{-0.1cm}
\end{figure}
\FloatBarrier
The output of this algorithm is a path $\o$ from $\s$ to $\bb$, which we construct as concatenation of $2\hdtr$ paths $\o^{(1)},\dots,\o^{(2 \hdtr)}$. For every $j=1,\dots,2\hdtr$, path $\o^{(j)}$ goes from $\s_j$ to $\s_{j+1}$, where we set $\s_{1}:=\s$, $\s_{2L+1}:=\bb$ and define for $j=2,\dots,2\hdtr$
\[
	\s_{j}(v):=
	\begin{cases}
		\bb(v) 		& \text{ if } v \in c_2,\dots,c_{3j},\\
		\s(v) 	& \text{ if } v\in c_{3j+1}, \dots, c_{6L+1}.\\
	\end{cases}
\]
We now describe in detail how to construct each of the paths $\o^{(j)}$ for $j=1,\dots,2\hdtr$.
We distinguish two cases, depending on whether (a) $\s_j$ has a vertical bridge in column $c_{3j+2}$ or (b) not, see the two examples in Figure~\ref{fig:RACab}.  

\begin{figure}[!ht]
	\centering
	\vspace{-0.1cm}
	\subfloat[The configuration $\s_1$ has a $\cc$--bridge on column $c_8$\label{fig:RACa}]{\includegraphics[scale=1]{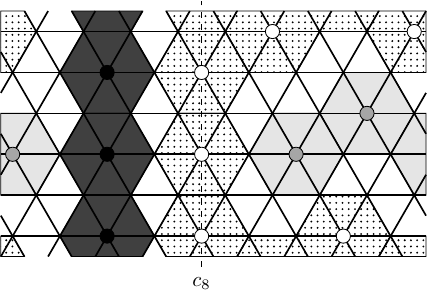}}
	\hspace{0.9cm}
	\subfloat[The configuration $\s_2$ has no $\cc$--bridges on column $c_{11}$\label{fig:RACb}]{\includegraphics[scale=1]{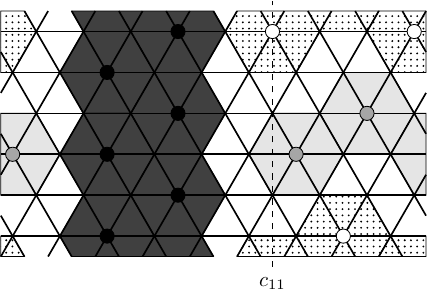}}
	\vspace{-0.1cm}
	\caption{The configurations $\s_1$ and $\s_2$ corresponding to the initial configuration $\s$ in Figure~\ref{fig:c2c3free}}
	\label{fig:RACab}
\end{figure}
\FloatBarrier

Consider case (a) first. First notice that the presence of a vertical ($\cc$--)bridge in column $c_{3j+2}$ implies that all sites of the adjacent column $c_{3j+3}$ must be empty in configuration $\s_j$.

We construct a path $\o^{(j)}=(\o^{(j)}_{1}, \dots, \o^{(j)}_{2\vdtr+1})$ of length $2\vdtr+1$ (but possibly comprising void moves), with $\o^{(j)}_{1}=\s_j$ and $\o^{(j)}_{2\vdtr+1}=\s_{j+1}$. Denote by $o(j) \in \{0,1\}$ the integer number such that $o(j) \equiv j \pmod 2$. We first remove the two white particles in column $c_{3j+2}$ that lie in row $r_{o(j)}$ and $r_{o(j)+2}$ in two successive steps, obtaining in this way configuration $o^{(j)}_3$, which is such that $H(\o^{(j)}_{3}) = H(\o^{(j)}_{1})+2$. We then repeat iteratively the following procedure to obtain the configuration $\o^{(j)}_{i+1}$ from $\o^{(j)}_{i}$ for all $i=3,\dots,2\vdtr-1$:
\begin{itemize}
	\item If $i \equiv 1 \pmod 2$, consider the site $v \in c_{3j+1} \subset \BB$ with coordinates $(3j+1,o(j)+i-2)$ and add a (black) particle there, obtaining in this way configuration $\o^{(j)}_{i+1}$. Such a particle can be added since all its six neighboring sites are empty. More specifically, the three left ones have been (possibly) emptied along the path $\o^{(j-1)}$, while the one in $c_{3j+3}$ is empty by assumption and the other two sites on its right have been emptied in the previous steps of $\o^{(j)}$. Since we added one particle, $H(\o^{(j)}_{i+1}) = H(\o^{(j)}_{i})-1$.
	\item If $i \equiv 0 \pmod 2$, consider the site $v \in c_{3j+2} \subset \CC$ with coordinates $(3j+2,o(j)+i)$ and remove the (white) particle lying there, obtaining in this way configuration $\o^{(j)}_{i+1}$, which is such that $H(\o^{(j)}_{i+1}) = H(\o^{(j)}_{i})+1$.
\end{itemize}
This procedure outputs configuration $\o^{(j)}_{2\vdtr}$ which has no white particles in column $c_{3j+2}$ and an empty site in column $c_{3j+1}$, the one with coordinates $(3j+1,2\vdtr-1-o(j))$. All the neighboring sites of this site are empty by construction and, adding a black particle in this site, we obtain configuration $\o^{(j)}_{2\vdtr+1}=\s_{j+1}$, which is such that $H(\s_{j+1}) = H(\o^{(j)}_{2\vdtr})-1$. The way the path $\o^{(j)}$ is constructed shows that
\[
	H(\s_{j+1}) = H(\s_j),
\]
since we added exactly $\vdtr$ (black) particles in column $c_{3j+1}$ and removed exactly $\vdtr$ (white) particles in columns $c_{3j+2}$. Moreover,
\begin{equation}
\label{eq:phicase1}
	\Phi_{\o^{(j)}} = \max_{\h \in \o^{(j)}} H(\h) = H(\s_j) + 2
\end{equation}
since along the path $\o^{(j)}$ every particle removal is followed by a particle addition, except at the beginning when we remove two particles consecutively.

Consider now case (b). We claim that, since there is no vertical ($\cc$--)bridge in column $c_{3j+2}$, there exists a site $v^*$ in column $c_{3j+1}$ with at most one neighboring occupied site. First of all, all sites in column $c_{3j}$ and $c_{3j-1}$ have been emptied along the path $\o^{(j-1)}$, so all sites in $c_{3j+1}$ have no left neighboring sites occupied. Let us look now at the right neighboring sites. Since there is no vertical $\cc$--bridge in column $c_{3j+2}$, there exists an empty site in it, say $w$. Modulo relabeling the rows, we may assume that $w$ has coordinates $(3j+2,o(j))$, where $o(j)$ is the integer in $\{0,1\}$ such that $o(j) \equiv j \pmod 2$. The site $v^*=(3j+1,o(j)+1)$ has then the desired property, since at most one of its two remaining right neighboring sites (those with coordinates $(3j+2,o(j)+2)$ and $(3j+3,o(j)+1)$, respectively) can be occupied, since they are also neighbors of each other.

We construct a path $\o^{(j)}=(\o^{(j)}_{1}, \dots, \o^{(j)}_{2\vdtr+1})$ of length $2\vdtr+1$ (but possibly comprising void moves), with $\o^{(j)}_{1}=\s_j$ and $\o^{(j)}_{2\vdtr+1}=\s_{j+1}$. We then repeat iteratively the following procedure to obtain configuration $\o^{(j)}_{i+1}$ from $\o^{(j)}_{i}$ for all $i=1,\dots,2\vdtr$:
\begin{itemize}
	\item If $i \equiv 1 \pmod 2$, consider the two sites $(3j+2,o(j)+i+1) \in \CC$ and $(3j+3,o(j)+i) \in \AA$. Since they are neighboring sites, at most one of them is occupied. If they are both empty, we set $\o^{(j)}_{i+1}= \o^{(j)}_{i}$. If instead there is a particle in either of the two, we remove it, obtaining in this way configuration $\o^{(j)}_{i+1}$, which is such that $H(\o^{(j)}_{i+1}) = H(\o^{(j)}_{i})+1$.
	\item If $i \equiv 0 \pmod 2$, consider the site $v \in c_{3j+1} \subset \BB$ with coordinates $(3j+1,o(j)+i-1)$ and add a (black) particle there, obtaining in this way configuration $\o^{(j)}_{i+1}$. Such a particle can be added since all its six neighboring sites are empty. More specifically, the three left ones have been (possibly) emptied along the path $\o^{(j-1)}$, while the other two sites on its right have been emptied in the previous step of $\o^{(j)}$. Since we added one particle, $H(\o^{(j)}_{i+1}) = H(\o^{(j)}_{i})-1$.
\end{itemize}
The way the path $\o^{(j)}$ is constructed shows that
$
	H(\s_{j+1}) \leq H(\s_j),
$
since the number of (black) particles added in column $c_{3j+1}$ is greater than or equal to the number of (white/gray) particles removed in columns $c_{3j+2}$ and $c_{3j+3}$. Moreover, along the path $\o^{(j)}$ every particle removal (if any) is always followed by a particle addition, and hence
\begin{equation}
\label{eq:phicase2}
	\Phi_{\o^{(j)}} = \max_{\h \in \o^{(j)}} H(\h) \leq H(\s_j) + 1.
\end{equation}
Consider now the path $\o: \s \to \bb$ created by concatenating $\o^{(1)},\dots,\o^{(2\hdtr)}$, which are constructed either using the procedure in case (a) or that in case (b). 
First notice that, regardless of which procedure has been used at step $j$, the inequality $H(\s_{j+1}) \leq H(\s_j)$ holds for every $j=1,\dots,2\hdtr$. Using this fact in combination with~\eqref{eq:phicase1} and~\eqref{eq:phicase2} shows that the path $\o$ always satisfies
\[
	\Phi_{\o} \leq H(\s) + 2.
\]
Furthermore, in the special case in which $\s$ has no vertical $\cc$--bridges, our procedure considers case (b) for every $j=1,\dots,2 \hdtr$ and thus, by virtue of~\eqref{eq:phicase2}, the path $\o$ satisfies
\[
	\Phi_{\o} - H(\s) \leq 1.
\]

If the target configuration of the energy reduction algorithm by columns is the configuration $\aa$ (or $\cc$) one should adjust the condition~\eqref{eq:initialconditionRAC} on the initial condition accordingly, requiring that $\s$ has no particles in columns $c_1$ and $c_2$ (columns $c_0$ and $c_1$, respectively). The offset of rows and columns in the procedures described above should of course be tweaked appropriately.

We now use the energy reduction algorithms we just introduced to show that the lower bound for the communication height between $\aa$ and $\bb$ given in Proposition~\ref{prop:lowerphiaabb} is sharp, by explicitly giving a path that attains that value.
\begin{prop}[Reference path]\label{prop:refpathaatobb}
In the energy landscape corresponding to the hard-core model on the $2K \times 3L$ triangular grid there exists a path $\o^*: \aa \to \bb$ in $\cX$ such that
\[
	\Phi_{\o^*} - H(\aa) = \min\{\vdtr, 2\hdtr\}+1.
\]
\end{prop}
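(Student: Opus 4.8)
The plan is to exhibit one explicit path from $\aa$ to $\bb$ and match its height against the lower bound already proved. Proposition~\ref{prop:lowerphiaabb} gives $\Phi(\aa,\bb)-H(\aa)\ge\min\{\vdtr,2\hdtr\}+1$, and since every path $\o^*:\aa\to\bb$ satisfies $\Phi_{\o^*}\ge\Phi(\aa,\bb)$, it suffices to construct a single path with $\Phi_{\o^*}-H(\aa)\le\min\{\vdtr,2\hdtr\}+1$; the two inequalities then force the desired equality. In each case the recipe is the same: starting from $\aa$, first empty a suitable one-dimensional slice so as to reach a configuration $\s$ that meets the initial condition of one of the energy reduction algorithms of Subsection~\ref{sub52}, then let that algorithm descend to $\bb$. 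The emptying segment is a sequence of legal particle removals, so it raises the energy monotonically by exactly the number of particles removed, and the point of the construction is to empty the cheaper of the two available slices.

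Concretely, if $\vdtr\le 2\hdtr$ I would empty the gray column $c_3$: removing its $\vdtr$ gray particles one at a time produces a path whose energy climbs to its maximum $H(\aa)+\vdtr$ at the endpoint $\s$. Since the white column $c_2$ is already vacant in $\aa$, this $\s$ has no particles on $c_2\cup c_3$ and hence satisfies~\eqref{eq:initialconditionRAC}; as $\aa$ is monochromatic, $\s$ also has no white particles and therefore no vertical white bridge, so the \emph{sharper} regime of the energy reduction algorithm by columns applies, yielding a path $\s\to\bb$ of height at most $H(\s)+1=H(\aa)+\vdtr+1$. If instead $\vdtr>2\hdtr$, I would empty the first horizontal stripe $S_0=r_0\cup r_1$ of its $2\hdtr$ gray particles (there being $\hdtr$ per row by~\eqref{eq:rowsarebalanced}), reaching $H(\aa)+2\hdtr$; the resulting $\s$ has no gray or white particles in $S_0$, so it satisfies~\eqref{eq:initialconditionRAR}, and the energy reduction algorithm by rows produces a path $\s\to\bb$ of height at most $H(\s)+1=H(\aa)+2\hdtr+1$.

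In either regime the concatenated path $\o^*$ satisfies $\Phi_{\o^*}-H(\aa)\le\min\{\vdtr,2\hdtr\}+1$, with both the emptying segment (contributing exactly $\min\{\vdtr,2\hdtr\}$) and the algorithm segment (adding at most one more) staying below this value; together with Proposition~\ref{prop:lowerphiaabb} this gives the claimed equality. The one genuinely substantive point is the choice of slice: emptying a column costs $\vdtr$ while emptying a horizontal stripe costs $2\hdtr$, so selecting whichever is cheaper is precisely what produces the exponent $\min\{\vdtr,2\hdtr\}$. The remaining work is bookkeeping — checking that each endpoint $\s$ meets the hypotheses of the relevant algorithm, and in the column case noting the absence of white vertical bridges so that the bound $\le 1$ rather than $\le 2$ is available — and all of this follows at once from the fact that $\aa$ carries only gray particles.
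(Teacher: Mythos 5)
Your construction is correct and coincides with the paper's own proof: in the case $\vdtr\le 2\hdtr$ the paper also empties the gray column $c_3$ to reach a configuration satisfying~\eqref{eq:initialconditionRAC} with no white vertical bridges and then runs the energy reduction algorithm by columns, and in the case $\vdtr>2\hdtr$ it empties the stripe $S_0$ and runs the algorithm by rows, obtaining the same heights $H(\aa)+\min\{\vdtr,2\hdtr\}+1$. Your additional remark that the matching lower bound of Proposition~\ref{prop:lowerphiaabb} upgrades the inequality to the stated equality is a valid (and slightly cleaner) way to finish.
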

\begin{proof}
We distinguish two cases, depending on whether (a) $\vdtr \leq 2 \hdtr$ and (b) $\vdtr > 2 \hdtr$. In either case we first construct a path $\o^{(1)}: \aa \to \s^*$ where $\s^*$ is a configuration to which we can apply energy reduction algorithm by columns (rows, respectively), and then, using this latter, we produce a path $\o^{(2)}: \s^* \to \bb$. The desired path $\o^*: \aa \to \bb$ will then be the concatenation of the paths $\o^{(1)}$ and $\o^{(2)}$.

Figure~\ref{fig:aatobbfirst} illustrates the reference path from $\aa$ to $\bb$ in case (a) for the $6 \times 9$ triangular grid, while Figure~\ref{fig:aatobbsecond} depicts some snapshots of $\o^*: \aa \to \bb$ in case (b) for the $10 \times 6$ triangular grid.

For case (a), the configuration $\s^*$ differs from $\aa$ only in the sites of column $c_3$ and, specifically,
\[
	\s^*(v):=
	\begin{cases}
		\aa(v) 	& \text{ if } v \in V \setminus c_3,\\
		0 		& \text{ if } v \in c_3.
	\end{cases}
\]
The path $\o^{(1)}=(\o^{(1)}_{1},\dots,\o^{(1)}_{\vdtr+1})$, with $\o^{(1)}_{1}=\aa$ and $\o^{(1)}_{\vdtr+1}=\s^*$ can be constructed as follows. For $i=1,\dots,\vdtr$, at step $i$ we remove from configuration $\o^{(1)}_{i}$ the particle in the site of coordinates $(3,2i-1)$, increasing the energy by $1$ and obtaining in this way configuration $\o^{(1)}_{i+1}$. Therefore the configuration $\s^*$ is such that 
$
	H(\s^*)-H(\aa) = \vdtr
$
and 
$
	\Phi_{\o^{(1)}} = H(\s^*) = H(\aa)+ \vdtr.
$

The second path $\o^{(2)}: \s^* \to \bb$ is then constructed by means of the energy reduction algorithm by columns, which can be used since the configuration $\s^*$ satisfies condition~\eqref{eq:initialconditionRAC} and hence is a suitable initial configuration for the algorithm. Since configuration $\s^*$ has no vertical $\cc$--bridges (see case (b) for the energy reduction algorithm by columns), the procedure guarantees that
\[
	\Phi_{\o^{(2)}} = H(\s^*) +1 = H(\aa) + \vdtr +1,
\]
and, therefore, $\Phi_{\o^*} = \max\{\Phi_{\o^{(1)}},\Phi_{\o^{(2)}}\} = H(\aa)+ \vdtr+1$ as desired. 

\captionsetup[subfigure]{labelformat=empty}
\begin{figure}[!ht]
\centering
	\subfloat[$\aa$]{\includegraphics{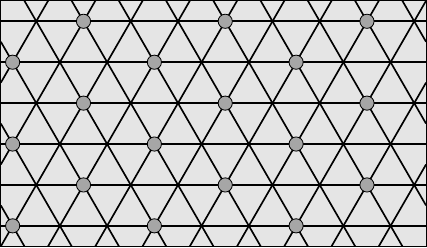}}
	\hspace{0.5cm}
	\subfloat[$\s^*=\o^{(2)}_1$]{\includegraphics{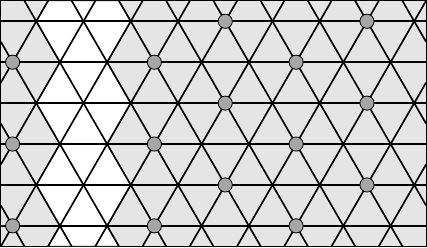}}
	\\
	\subfloat[$\o^{(2)}_2$]{\includegraphics{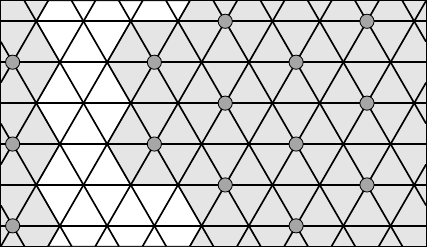}}
	\hspace{0.5cm}
	\subfloat[$\o^{(2)}_3$]{\includegraphics{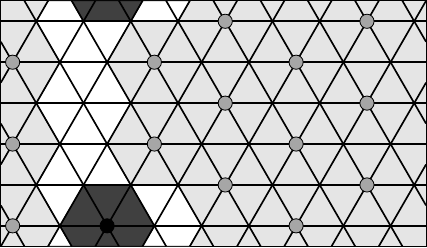}}
	\\
	\subfloat[$\o^{(2)}_4$]{\includegraphics{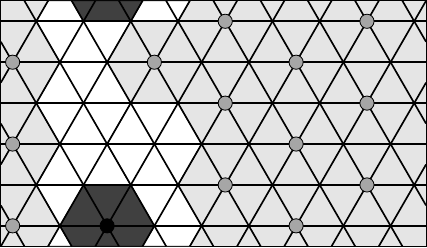}}
	\hspace{0.5cm}
	\subfloat[$\o^{(2)}_5$]{\includegraphics{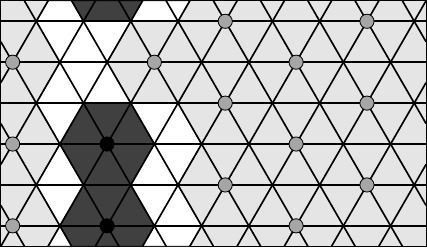}}
	\\
	\subfloat[$\o^{(2)}_6$]{\includegraphics{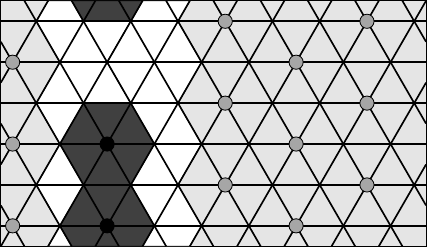}}
	\hspace{0.5cm}
	\subfloat[$\o^{(2)}_7$]{\includegraphics{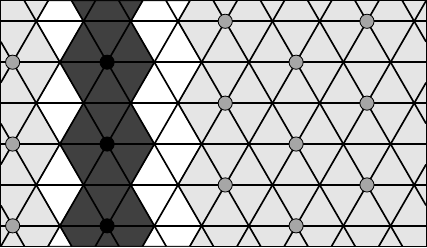}}
	\\
	\subfloat[$\o^{(2)}_{13}$]{\includegraphics{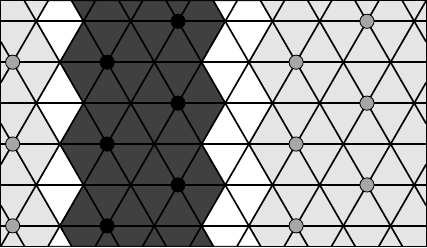}}
	\hspace{0.5cm}
	\subfloat[$\bb$]{\includegraphics{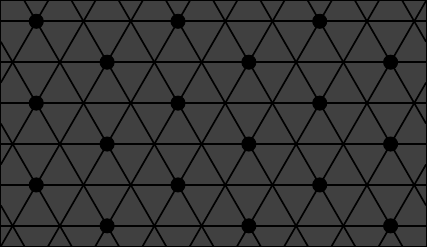}}
	\caption{Illustration of the reference path $\o^*: \aa \to \bb$ in the case $\vdtr \leq 2\hdtr$}
	\label{fig:aatobbfirst}
\end{figure}

\begin{figure}[!ht]
\centering
	\subfloat[$\aa$]{\includegraphics{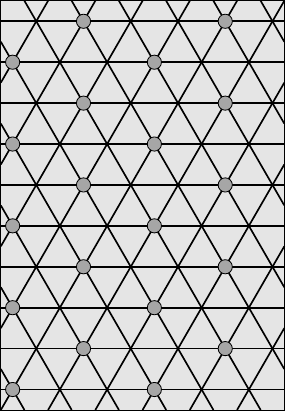}}
	\hspace{0.3cm}
	\subfloat[$\s^*=\o^{(2)}_1$]{\includegraphics{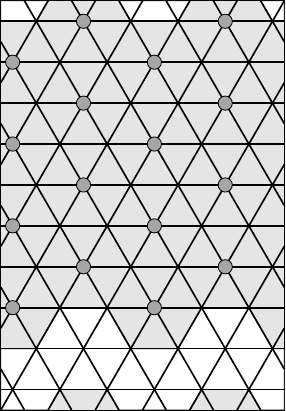}}
	\hspace{0.3cm}
	\subfloat[$\o^{(2)}_2$]{\includegraphics{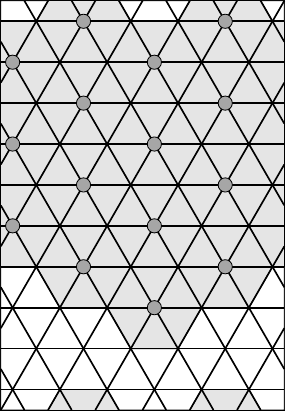}}
	\\
	\subfloat[$\o^{(2)}_3$]{\includegraphics{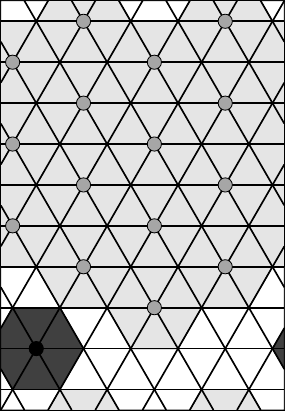}}
	\hspace{0.3cm}
	\subfloat[$\o^{(2)}_4$]{\includegraphics{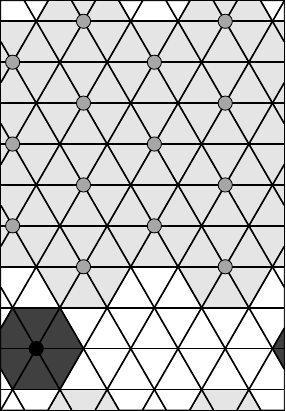}}
	\hspace{0.3cm}
	\subfloat[$\o^{(2)}_5$]{\includegraphics{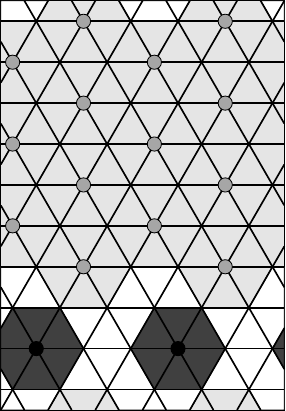}}
	\\
	\subfloat[$\o^{(2)}_9$]{\includegraphics{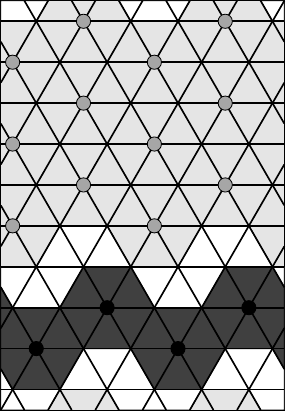}}
	\hspace{0.3cm}
	\subfloat[$\o^{(2)}_{13}$]{\includegraphics{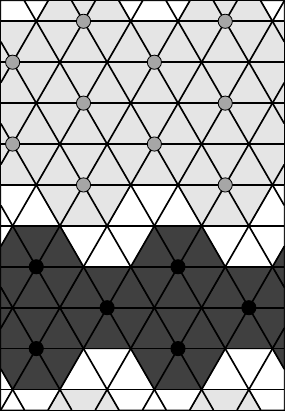}}
	\hspace{0.3cm}
	\subfloat[$\bb$]{\includegraphics{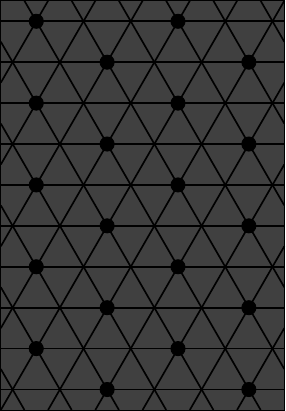}}
	\caption{Illustration of the reference path $\o^*: \aa \to \bb$ in the case $\vdtr > 2 \hdtr$}
	\label{fig:aatobbsecond}
\end{figure}

For case (b), consider the configuration $\s^*$ that differs from $\aa$ only in the sites of the first horizontal stripe $S_0$, namely
\[
	\s^*(v):=
	\begin{cases}
		\aa(v) 	& \text{ if } v \in V \setminus S_0,\\
		0 		& \text{ if } v \in S_0.
	\end{cases}
\]
The path $\o^{(1)}=(\o^{(1)}_{1},\dots,\o^{(1)}_{2\hdtr+1})$, with $\o^{(1)}_{1}=\aa$ and $\o^{(1)}_{2\hdtr+1}=\s^*$ can be constructed as follows. For $i=1,\dots,2\hdtr$, at step $i$ we remove from configuration $\o^{(1)}_{i}$ the first particle in lexicographic order in $S_0$, increasing the energy by $1$ and obtaining in this way configuration $\o^{(1)}_{i+1}$. Therefore the configuration $\s^*$ is such that 
$
	H(\s^*)-H(\aa) = 2\hdtr
$
and 
$
	\Phi_{\o^{(1)}} = H(\s^*) = H(\aa)+ 2\hdtr.
$

The second path $\o^{(2)}: \s^* \to \bb$ is then constructed by means of the energy reduction algorithm by rows, which can be used since the configuration $\s^*$ satisfies condition~\eqref{eq:initialconditionRAR} and hence is a suitable initial configuration for the algorithm. The energy reduction algorithm by rows guarantees that
\[
	\Phi_{\o^{(2)}} = H(\s^*) +1 = H(\aa) + 2\hdtr +1.
\]
and thus the conclusion follows, since $\Phi_{\o^*} = \max\{\Phi_{\o^{(1)}},\Phi_{\o^{(2)}}\} = H(\aa)+ 2\hdtr+1$.
\end{proof}
\FloatBarrier

\subsection{Proof of Theorem~\ref{thm:structuralproperties}}
\label{sub53}

\hspace{\parindent} (i) In every hard-core configuration $\s \in \cX$ each particle blocks exactly six triangles, so that the total number $b_\L(\s)$ of blocked triangles in $\s$ is given by
\begin{equation}
\label{eq:trH}
	b_\L(\s)=6 \sum_{v \in V} \s(v) = - 6 H(\s).
\end{equation}
Since $\L$ has $12 K L$ triangles in total and we must have $b_\L(\s) \leq 12 K L$, it readily follows that
$
	\max_{\s \in \cX} \sum_{v \in V} \s(v) \leq 2 \vdtr\hdtr.
$
Configurations $\aa,\bb$ and $\cc$ attain this value in view of~\eqref{eq:componentsizestriangulargrid} and, hence, $\min_{\s \in \cX} H(\s) = -2 K L$.

Suppose by contradiction that there exists another configuration $\s \in \cX \setminus \{ \aa, \bb,\cc\}$ such that $ H(\s) = -2 K L$. In view of~\eqref{eq:trH}, this means that all the triangles are blocked in $\s$. Starting from any triangle and using iteratively the fact that blocked triangles sharing an edge must be of the same color (cf.~Remark 1), it is easy to show by induction that all the blocked triangles are of the same color and thus $\s \in \{ \aa, \bb,\cc\}$, which is a contradiction.\\

(ii) The proof of the identity involving $\Phi(\aa,\bb)-H(\aa)$ readily follows by combining the lower bound in Proposition~\ref{prop:lowerphiaabb} and the statement of Proposition~\ref{prop:refpathaatobb}; the remaining identities immediately follows from symmetry of the triangular grid.\\

(iii) We will show that for every hard-core configuration $\s$ on the $2K \times 3L$ triangular grid with $\s \neq \aa,\bb,\cc$, there exists a path $\o$ from $\s$ to one of the three stable configurations such that
\[
	\Phi_\o -H(\s) \leq \min \{ \vdtr, 2\hdtr\}.
\]
The idea is to construct such a path using the geometric features of the configuration $\s$ and exploiting the energy reduction algorithms described earlier in this section. We distinguish two cases: (a) $\vdtr \leq 2 \hdtr$ and (b) $\vdtr > 2 \hdtr$.

Consider case (a) first, where $\vdtr \leq 2 \hdtr$. We distinguish two sub-cases, depending on whether $\s$ has at least one vertical bridge or not.

If $\s$ has a vertical bridge in a vertical stripe $C$, then $\s$ is a suitable starting configuration for the energy reduction algorithm, which yields a path $\o$ that goes from $\s$ to the stable configuration in $\{\aa,\bb,\cc\}$ on which $\s$ agrees in stripe $C$. The path $\o$ constructed in this way is such that $\Phi_{\o} - H(\s) \leq 2$ and thus $\Phi(\s, \{\aa,\bb,\cc\}) - H(\s) \leq 2 \leq \vdtr \leq \min\{\vdtr, 2\hdtr\}$, since by assumption $\vdtr$ is an integer greater than $1$.
	
Suppose now that there are no vertical bridges in $\s$. Since $\s \not\in \{\aa,\bb,\cc\}$, which is the set of stable configurations in view of Theorem~\ref{thm:structuralproperties}(i), configuration $\s$ has a positive energy difference $\DH(\s)>0$. In view of~\eqref{eq:energywastagesumtr}, this means that there exists a vertical stripe $C^*$ such that $\DH_{C^*}(\s)>0$. Without loss of generality, we may assume (modulo relabeling) that $C^*$ is the vertical stripe $C_1$, which consists of columns $c_1, c_2$ and $c_3$. By definition of energy difference in a stripe, it follows that $\s$ has at most $\vdtr-1$ particles. Removing all the gray and white particles one by one, we construct a path $\o^{(1)}$ from $\s$ to a new configuration $\s^*$ defined as
\[
	\s^*(v) :=
	\begin{cases}
		\s(v) 	& \text{ if } v \in V \setminus (c_2 \cup c_3),\\
		0 			& \text{ if } v \in c_2 \cup c_3.	
	\end{cases}
\]
Since $\s$ has at most $\vdtr-1$ particles in the vertical stripe $C_1$, it follows that
\begin{equation}
\label{eq:sstarnobridges}
	H(\s^*)-H(\s)\leq K-1 \quad \text{ and } \quad \Phi_{\o^{(1)}} - H(\s) \leq \vdtr-1.
\end{equation}
Since we remove all the gray particles from $c_3$ and all the white particles from $c_2$, $\s^*$ is a suitable starting configuration for the energy reduction algorithm by columns with target configuration $\bb$, in view of~\eqref{eq:initialconditionRAC}. We obtain in this way a second path $\o^{(2)}: \s^* \to \bb$, which is such that
\begin{equation}
\label{eq:phio2nobridges}
	\Phi_{\o^{(2)}} - H(\s^*) \leq 1,
\end{equation}
thanks to the absence of vertical bridges in $\s$ (and thus in $\s^*$). In view of~\eqref{eq:sstarnobridges} and~\eqref{eq:phio2nobridges}, the path $\o: \s \to \bb$ obtained by concatenating $\o^{(1)}$ and $\o^{(2)}$ is such that 
$
	\Phi_{\o} - H(\s) \leq \vdtr,
$
and hence $\Phi(\s, \{\aa,\bb,\cc\}) - H(\s) \leq \vdtr$. 

We remark that there is nothing special about $\bb$ as target configuration of the path $\o$ we just constructed. Indeed, by choosing the vertical stripe $C^*$ with a different offset, we could have obtained a configuration $\s^*$ which would have been a suitable initial configuration for the energy reduction algorithm by columns with target configuration $\aa$ or $\cc$.

We now turn to case (b), in which $\vdtr > 2 \hdtr$. Thanks to Lemma~\ref{lem:efficientstripes}(i), there must be a horizontal stripe $S$ on which $\s$ does not have a horizontal bridge, otherwise $\s \in \{\aa,\bb,\cc\}$. In particular, $\s$ has at most $2\hdtr-1$ particles on $S$, which without loss of generality we may assume to be $S_0$. We construct a path $\o^{(1)}$ from $\s$ to a new configuration $\s^*$ by removing all these particles one by one, so that $\Phi_{\o^{(1)}} - H(\s) \leq 2\hdtr-1$ and $H(\s^*)-H(\s) \leq 2\hdtr-1$. Starting with configuration $\s^*$ we can then use the energy reduction algorithm by rows to obtain a second path $\o^{(2)}$ from $\s^*$ to any of the three stable configurations. Since $\Phi_{\o^{(2)}} - H(\s^*) \leq 1$, the path $\o$ constructed by the concatenation of $\o^{(1)}$ and $\o^{(2)}$ satisfies 
$
	\Phi_{\o} - H(\s) \leq 2\hdtr
$
and thus $\Phi(\s, \{\aa,\bb,\cc\}) - H(\s) \leq 2 \hdtr$. \qed

\section{Proofs of the main results}
\label{newsec4}
This section is devoted to the proof of the two main results of the paper, namely Theorems~\ref{thm:tabc} and~\ref{thm:mix_triangularlattice}

We first briefly recall in Subsection~\ref{newsub41} some model-independent results derived in~\cite{NZB16} valid for any Metropolis Markov chain. We show how these general results can be used in combination with the structural properties of the energy landscape of the hard-core model on triangular grids, outlined in Theorem~\ref{thm:structuralproperties}, to prove statements (i), (ii), and (iii) of Theorem~\ref{thm:tabc} in Subsection~\ref{newsub42} and Theorem~\ref{thm:mix_triangularlattice} in Subsection~\ref{newsub43}.
Although statements (iii) and (iv) of Theorem~\ref{thm:tabc} both concern the asymptotic exponentiality of the scaled hitting times and look alike, their proofs slightly differ and for this reason that of statement (iv) is presented separately, in Subsection~\ref{newsub44}, leveraging the symmetries that the state space $\cX$ inherits from the non-trivial automorphisms of the graph $\L$.

\subsection{Model-indepedent results for Metropolis Markov chains}
\label{newsub41}

We present here the model-independent results of the general framework developed in~\cite{NZB16} only in a special case that is relevant for the tunneling times $\tab$ and $\tabc$ under analysis. The more general statements can be found in~\cite{NZB16}, see Corollary~3.16, Theorem~3.17 and~3.19, and Proposition~3.18 and~3.20 therein.

\begin{prop}[Hitting time asymptotics~\cite{NZB16}] \label{prop:modindep}
Consider a non-empty subset $A \subset \cX$ and $\s \in \cX \setminus A$ and the following two conditions: 
\begin{equation}
\label{eq:suffcondPE}
	\Phi(\s,A) - H(\s) = \max_{\h \in \cX \setminus A} \Phi(\h,A) - H(\h),
\end{equation}
and
\begin{equation}
\label{eq:suffcondAE}
	\Phi(\s,A) - H(\s) > \max_{\h \in \cX \setminus A, \, \h \neq \s} \Phi(\h, A \cup \{\s\}) - H(\h).
\end{equation}
\textup{(i)} If~\eqref{eq:suffcondPE} holds for the pair $(\s,A)$, then, setting $\G:= \Phi(\s,A) - H(\s)$, we have that for any $\e>0$
\[
	\limb \pr{ e^{\b (\G-\e)} < \tha < e^{\b (\G+\e)}} =1, \qquad \text{ and } \qquad \limb \frac{1}{\b} \log \E \tha = \G.
\]
\textup{(ii)} If~\eqref{eq:suffcondAE} holds for the pair $(\s,A)$, then
\[
	\frac{\tha}{ \E \tha} \cd \rmexp(1), \quad \mathrm{ as } \, \, \binf.
\]
More precisely, there exist two functions $k_1(\b)$ and $k_2(\b)$ with $\limb k_1(\b)=0$ and $\limb k_2(\b)=0$ such that for any $s>0$
\[
	\Big | \pr{\frac{ \tha }{\E \tha} > s} - e^{-s} \Big | \leq k_1(\b) e^{-(1-k_2(\b))s}.
\]
\end{prop}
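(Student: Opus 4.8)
The plan is to treat the two statements separately, in both cases leaning on the decomposition of the energy landscape $(\cX, H, Q)$ into cycles that underlies the Freidlin--Wentzell analysis of Metropolis chains. Throughout write $\G := \Phi(\s,A) - H(\s)$ and $\G_m := \max_{\h \in \cX \setminus A}\bigl[\Phi(\h,A) - H(\h)\bigr]$, so that condition~\eqref{eq:suffcondPE} reads simply $\G = \G_m$. The backbone of part~(i) is a \emph{uniform recurrence} estimate: for every $\e > 0$ one shows
\[
	\limb \max_{\h \in \cX \setminus A} \pr{\t^{\h}_A > e^{\b(\G_m + \e)}} = 0.
\]
This follows from the standard fact that every cycle disjoint from $A$ has depth at most $\G_m$, hence is exited before time $e^{\b(\G_m + \e)}$ with probability tending to one; descending the cycle hierarchy, the chain reaches $A$ within this timescale irrespective of its starting configuration. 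Specialising to $\h = \s$ and invoking $\G = \G_m$ gives the upper bound $\tha < e^{\b(\G + \e)}$ with high probability. For the matching lower bound I would use that, since $\Phi(\s,A) - H(\s) = \G$, the configuration $\s$ sits in an initial cycle of depth exactly $\G$ whose every exit towards $A$ crosses an energy barrier at least $\G$; a reversibility (capacity) estimate then yields $\prin{\tha < e^{\b(\G - \e)}} \to 0$. The two bounds together prove the probability statement in~(i).

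The logarithmic asymptotics of $\E\tha$ follow by coupling these tail bounds with a restart argument. For the lower bound, the elementary inequality $\E\tha \geq e^{\b(\G-\e)}\,\prin{\tha \geq e^{\b(\G-\e)}}$ and the lower-tail estimate give $\liminfb \frac{1}{\b}\log\E\tha \geq \G - \e$, hence $\geq \G$ after $\e \downarrow 0$. For the upper bound I would iterate the uniform recurrence estimate: setting $T_\b := e^{\b(\G + \e)}$ and combining the Markov property with the uniform-in-$\h$ bound gives $\prin{\tha > k T_\b} \leq \delta_\b^{\,k}$ for some $\delta_\b \to 0$, whence $\E\tha \leq T_\b/(1 - \delta_\b) \leq 2\, e^{\b(\G + \e)}$ for $\b$ large. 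Letting $\e \downarrow 0$ closes the gap and yields $\limb \frac{1}{\b}\log\E\tha = \G$.

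For part~(ii), the stronger hypothesis~\eqref{eq:suffcondAE} guarantees that once $\s$ is adjoined to the target the maximal residual depth drops to some $\G' < \G$, \ie the cycle containing $\s$ is \emph{strictly} the deepest subset of $\cX \setminus A$. I would exploit this through a regeneration scheme, decomposing $\tha$ into successive excursions from $\s$, each of which either reaches $A$ (a ``success'') or returns to $\s$ beforehand. The strict gap forces every excursion to have length $e^{\b(\G' + o(1))}$ with overwhelming probability, which is negligible against $\E\tha = e^{\b(\G + o(1))}$, while an approximate loss-of-memory property renders the successive excursions asymptotically independent and identically distributed, with per-excursion success probability $p_\b = e^{-\b(\G + o(1))}$. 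The hitting time is then well approximated by a geometric number $\rmgeo(p_\b)$ of short excursions; upon rescaling by the mean and sending $\binf$, the geometric count converges to $\rmexp(1)$ while the excursion-length fluctuations vanish, giving $\tha/\E\tha \cd \rmexp(1)$. Tracking each approximation quantitatively produces the correction functions $k_1(\b), k_2(\b) \to 0$ in the stated error bound.

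The main obstacle is exactly this quantitative control in part~(ii). Converting the geometric-sum heuristic into the explicit estimate $\bigl| \prin{\tha/\E\tha > s} - e^{-s} \bigr| \leq k_1(\b)\, e^{-(1 - k_2(\b)) s}$ requires uniform bounds, valid for all $s > 0$, on both the approximate independence of the excursions and the smallness of their durations relative to $\E\tha$. This is precisely where the \emph{strict} inequality in~\eqref{eq:suffcondAE}, as opposed to the mere equality in~\eqref{eq:suffcondPE}, becomes indispensable: it supplies the timescale separation $\G' < \G$ that makes the per-excursion error terms uniformly summable in the scaling variable $s$, whereas under~\eqref{eq:suffcondPE} alone one can only control the first two logarithmic moments and the exponential limit may genuinely fail.
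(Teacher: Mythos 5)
This proposition is not proved in the paper at all: it is imported verbatim from~\cite{NZB16} (the text points to Corollary~3.16, Theorems~3.17 and~3.19, and Propositions~3.18 and~3.20 there), so there is no in-paper proof to compare against. Your outline — uniform recurrence via the bound on cycle depths plus a restart iteration for part~(i), and the regeneration-into-excursions-from-$\s$ decomposition with a geometric number of attempts for part~(ii), with the strict inequality~\eqref{eq:suffcondAE} used to keep every excursion on a strictly shorter timescale than $e^{\b\G}$ — is exactly the architecture of the cited proofs in the pathwise-approach literature, so the strategy is the same as the source's, with the quantitative $k_1,k_2$ estimate correctly identified as the part requiring the detailed bookkeeping carried out in~\cite{NZB16}.
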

Condition~\eqref{eq:suffcondPE} says that the initial configuration $\s$ has an energy barrier separating it from the target subset $A$ that is maximum over the entire energy landscape. Informally, this means that all other ``valleys'' (or more formally \textit{cycles}, see definition in~\cite{MNOS04}) of the energy landscape are not deeper than the one where the Markov chain starts; for this reason, the authors in~\cite{NZB16} refer to~\eqref{eq:suffcondPE} as ``absence of deep cycles''. On the other hand, condition~\eqref{eq:suffcondAE} guarantees that from any configuration $\h \in \cX$ the Markov chain $\smash{\xtbb}$ reaches the set $A \cup \{\s\}$ on a time scale strictly smaller than that at which the transition from $\s$ to $A$ occurs. We remark that both these conditions are sufficient, but not necessary, see~\cite{NZB16} for further discussion. 

For the proof of Theorem~\ref{thm:mix_triangularlattice}, we will also need the following proposition, which is also a general result concerning the asymptotic behavior of mixing time and spectral gap of any Metropolis Markov chain.
\begin{prop}[Mixing time asymptotics {\cite[Proposition 3.24]{NZB16}}]\label{prop:modindep3}
For any $0 < \e < 1$
\[
	\limb \frac{1}{\b} \log t^{\mathrm{mix}}_\b(\e) = \limb -\frac{1}{\b} \log \rho_\b = \G^*,
\]
where $\G^*:= \max_{\h \in \cX, \, \h \neq \s} \Phi(\h,\s) - H(\h)$ for any stable configuration $\s \in \ss$. Furthermore, there exist two positive constants $0 < c_1 \leq c_2 < \infty$ independent of $\b$ such that for every $\b \geq 0$
\[
	c_1 e^{-\b \G^*} \leq \rho_\b \leq c_2 e^{-\b \G^*}.
\]
\end{prop}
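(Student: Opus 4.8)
The plan is to prove the proposition in two stages: first establish the two-sided spectral gap estimate $c_1 e^{-\b\G^*}\le\rho_\b\le c_2 e^{-\b\G^*}$, which carries all the model-independent content, and then deduce the mixing-time asymptotics from the standard relations between mixing time and spectral gap for reversible chains. Throughout, the graph and hence $\cX$, $N$, $H$ are fixed while only $\b\to\infty$, so any factor polynomial in $|\cX|$ or $N$ is an admissible $\b$-independent constant. Before anything else I would record that $\G^*$ does not depend on the chosen stable configuration: writing $\G^*_\s:=\max_{\h\ne\s}[\Phi(\h,\s)-H(\h)]$, the ultrametric inequality $\Phi(x,z)\le\max\{\Phi(x,y),\Phi(y,z)\}$ (obtained by concatenating optimal paths) together with $H\equiv H_{\min}$ on $\ss$ gives $\G^*_{\s_1}\le\G^*_{\s_2}$ for any $\s_1,\s_2\in\ss$, and by symmetry equality; so I may fix one reference stable configuration $\s_0$ and set $\G^*=\G^*_{\s_0}$. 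I will assume $\G^*>0$, the case $\G^*=0$ being immediate.

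For the upper bound on $\rho_\b$ I would use the variational characterization $\rho_\b=\min_{f}\mathcal E_\b(f,f)/\mathrm{Var}_{\mu_\b}(f)$ with a single well-chosen test function. Let $\h^*$ attain the maximum defining $\G^*$, so that $A^*:=\Phi(\h^*,\s_0)=H(\h^*)+\G^*$, and let $C$ be the connected component of $\h^*$ in the sublevel set $\{\h:H(\h)<A^*\}$ (with respect to the moves allowed by $Q$). Since no path from $\h^*$ to $\s_0$ stays strictly below $A^*$, we have $\s_0\notin C$. Taking $f=\mathds 1_C$, every boundary edge $(x,y)$ with $x\in C$, $y\notin C$ must have $H(y)\ge A^*$ (otherwise $y$ would lie in $C$), so each contributes $\mu_\b(x)P_\b(x,y)=\tfrac1{NZ_\b}e^{-\b H(y)}\le\tfrac1{NZ_\b}e^{-\b A^*}$ to $\mathcal E_\b(\mathds1_C,\mathds1_C)$, whence $\mathcal E_\b(\mathds1_C,\mathds1_C)\le\tfrac{|\cX|^2}{NZ_\b}e^{-\b A^*}$; meanwhile $\mathrm{Var}_{\mu_\b}(\mathds1_C)\ge\mu_\b(\h^*)\,\mu_\b(\s_0)\ge\tfrac1{|\cX|}\tfrac{e^{-\b H(\h^*)}}{Z_\b}$, because $\mu_\b(\s_0)\ge|\cX|^{-1}$ (a global minimum has maximal weight). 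Dividing, the partition function cancels and the energies combine to $A^*-H(\h^*)=\G^*$, yielding $\rho_\b\le c_2 e^{-\b\G^*}$.

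For the lower bound I would apply the canonical-paths (Poincaré) inequality, routing every pair through $\s_0$: for each $\h$ fix an optimal path $\g_\h:\h\to\s_0$ of height $\Phi(\h,\s_0)$, and set $\Gamma_{\s,\s'}=\g_\s\star\overline{\g_{\s'}}$. The decisive point is that any edge $(x,y)$ on $\g_\s$ satisfies $\max\{H(x),H(y)\}\le\Phi(\s,\s_0)\le H(\s)+\G^*$, so that $\mu_\b(\s)/[\mu_\b(x)P_\b(x,y)]\le N e^{\b\G^*}$ uniformly; summing over the at most $|\cX|$ sources $\s$ through a fixed edge and over all targets $\s'$ (with $\sum_{\s'}\mu_\b(\s')\le1$), and using path lengths $\le|\cX|$, the congestion is at most $2N|\cX|^2 e^{\b\G^*}$, again with $Z_\b$ cancelling. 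Hence $\rho_\b\ge c_1 e^{-\b\G^*}$, completing the spectral gap estimate and, after taking $\tfrac1\b\log$, the claim $\limb-\tfrac1\b\log\rho_\b=\G^*$.

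Finally I would pass from the gap to the mixing time via the standard reversible-chain inequalities $(\rho_\b^{-1}-1)\log\tfrac1{2\e}\le t^{\mathrm{mix}}_\b(\e)\le\rho_{*}^{-1}\log\tfrac1{\e\mu_{\min}}$, where $\rho_*=1-\max\{\a_2,|\a_{|\cX|}|\}$ is the absolute spectral gap and $\mu_{\min}=\min_\s\mu_\b(\s)$. The lower bound uses only $\rho_\b=1-\a_2$ together with $\rho_\b\le c_2 e^{-\b\G^*}$, giving $\liminfb\tfrac1\b\log t^{\mathrm{mix}}_\b(\e)\ge\G^*$. For the matching upper bound I would note that $\log\mu_{\min}^{-1}=O(\b)$, so $\tfrac1\b\log\log\mu_{\min}^{-1}\to0$ and everything reduces to showing $\rho_*\asymp e^{-\b\G^*}$, that is, that the most negative eigenvalue $\a_{|\cX|}$ does not dominate. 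This is the main obstacle: single-site Metropolis dynamics is \emph{almost bipartite}, since every genuine move changes the parity of a natural grading and the only damping of the corresponding oscillation mode comes from the holding probabilities $P_\b(\s,\s)$, so one must exclude $\a_{|\cX|}\to-1$ on a scale faster than $\rho_\b$. I would control this through the identity $1+\a_{|\cX|}=\min_f\big[\tfrac12\sum_{\s,\s'}\mu_\b(\s)P_\b(\s,\s')(f(\s)+f(\s'))^2\big]/\|f\|_{\mu_\b}^2$, bounding the quadratic form below by its diagonal part $2\sum_\s\mu_\b(\s)P_\b(\s,\s)f(\s)^2$ and using that the stationary mass concentrates on the minimal-energy configurations, whose holding probabilities stay bounded away from $0$, so that $1+\a_{|\cX|}$ is bounded away from $0$ uniformly in $\b$. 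Consequently $\rho_*=\rho_\b$ for all large $\b$, the mixing upper bound matches, and $\limb\tfrac1\b\log t^{\mathrm{mix}}_\b(\e)=\G^*$; the two-sided estimate on $\rho_\b$ then furnishes the asserted constants $c_1,c_2$.
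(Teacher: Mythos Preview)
The paper does not prove this proposition; it is quoted verbatim from~\cite[Proposition~3.24]{NZB16} and used as a black box in the proof of Theorem~\ref{thm:mix_triangularlattice}. So there is no ``paper's own proof'' to compare against, and your attempt is effectively a reconstruction of the argument in the cited reference. Your outline is the standard one for Freidlin--Wentzell Markov chains: a bottleneck test function for the upper bound on $\rho_\b$, a canonical-path (Poincar\'e) estimate for the lower bound, and the classical relations between spectral gap and mixing time. The first two parts are correct and cleanly executed; in particular the key observation that along an optimal path $\g_\s$ every edge satisfies $\max\{H(x),H(y)\}\le H(\s)+\G^*$, so that the partition function cancels and only the constant $N e^{\b\G^*}$ survives, is exactly the right mechanism.

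The gap is in your treatment of the smallest eigenvalue $\a_{|\cX|}$. Your bound $1+\a_{|\cX|}\ge \min_f \,2\sum_\s\mu_\b(\s)P_\b(\s,\s)f(\s)^2/\|f\|_{\mu_\b}^2$ is correct, but the right-hand side equals $2\min_\s P_\b(\s,\s)$, since the minimizing $f$ can be a delta at the state with the smallest holding probability. The appeal to ``stationary mass concentrates on minimal-energy configurations'' is a non-sequitur here: the weights in the ratio are $\mu_\b(\s)f(\s)^2$, not $\mu_\b(\s)$, and $f$ is adversarial. In the present model the empty configuration $\mathbf{0}$ has $Q(\mathbf{0},\mathbf{0})=0$ and every move is energy-decreasing, so $P_\b(\mathbf{0},\mathbf{0})=0$ for all $\b$; your diagonal bound then yields only $1+\a_{|\cX|}\ge 0$, which is vacuous. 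To close this you need a genuinely different argument---for instance, comparing with the lazy chain $\tfrac12(I+P_\b)$ and checking that this changes neither $\rho_\b$ nor $t^{\mathrm{mix}}_\b(\e)$ by more than a bounded factor, or bounding the mixing time above directly via hitting-time estimates (as in~\cite{C99,MNOS04}) rather than through the absolute spectral gap.
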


\subsection{Proofs of Theorem \ref{thm:tabc}(i)-(iii)}
\label{newsub42}
From Theorem~\ref{thm:structuralproperties}(iii) it immediately follows that
\begin{equation}
\label{eq:tGaabbcc}
	\max_{\s \neq \aa, \bb, \cc} \Phi(\s, \{\aa,\bb,\cc\}) - H(\s)  \leq \min \{ \vdtr, 2\hdtr\}.
\end{equation}
Furthermore, we claim that the following identity holds:
\begin{equation}
\label{eq:tGbbcc}
	\max_{\s \neq \bb, \cc} \Phi(\s, \{\bb,\cc\}) - H(\s) = \min \{ \vdtr, 2\hdtr\} +1.
\end{equation}
First notice that since $\aa \in \cX \setminus \{\bb,\cc\}$, we have
\[
	\max_{\s \neq \bb, \cc} \Phi(\s, \{\bb,\cc\}) - H(\s) \geq \Phi(\aa,\{\bb,\cc\}) - H(\aa) = \min \{ \vdtr, 2\hdtr\} +1.
\]
In order to prove that identity~\eqref{eq:tGbbcc} holds, we need to show that this lower bound is sharp. In particular, we need to show that $\Phi(\s,\{\bb,\cc\}) -H(\s) \leq \min \{ \vdtr, 2\hdtr\}+1$ for every configuration $\s \neq \aa,\bb,\cc$, but we will actually prove a stronger inequality, namely
\begin{equation}
\label{eq:lowerboundtG}
	\Phi(\s,\bb) - H(\s) \leq \min \{ \vdtr, 2\hdtr\}+1, \quad \forall \, \s \in \cX \setminus \{ \aa, \bb, \cc\}.
\end{equation}
In Subsection~\ref{sub53} we introduced a iterative procedure that builds a path from any configuration $\s$ to the set of stable configuration $\ss$. More specifically, inspecting the proof of Theorem~\ref{thm:structuralproperties}(iii), we notice that every configuration $\s \neq \aa,\bb,\cc$ can be reduced either directly to $\bb$, or otherwise to $\aa$ or $\cc$, depending on its geometrical features. If $\s$ can be reduced directly to $\bb$, then we prove therein that $\Phi(\s,\bb) -H(\s) \leq \min \{ \vdtr, 2\hdtr\}$. If not, then $\s$ has to display a vertical $\aa$-- or $\cc$--bridge and $\vdtr \leq 2 \hdtr$. In the proof of Theorem~\ref{thm:structuralproperties}(iii) we construct a path $\o$ from $\s$ to $\aa$ (respectively, $\cc$) such that $\Phi_{\o} \leq H(\s) + 2$, which, concatenated with the reference path from $\aa$ to $\bb$ (exhibited in Proposition~\ref{prop:refpathaatobb}) or the analogous reference path from $\cc$ to $\bb$, shows that
$
	\Phi(\s,\bb) \leq \max\{ H(\s)+2, \Phi(\aa,\bb)\}.
$
Thus,
\begin{align*}
\Phi(\s,\bb)-H(\s)
	& \leq \max\{ 2, \Phi(\aa,\bb) - H(\s) \} \leq \max\{ 2, \Phi(\aa,\bb) - H(\aa) \} = \max\{ 2, \min\{\vdtr,2\hdtr\}+1  \} \\
	& \leq \min\{\vdtr,2\hdtr\}+1,
\end{align*}
which implies that inequality~\eqref{eq:lowerboundtG} holds. In view of~\eqref{eq:tGbbcc}, the pair $(\aa,\{\bb,\cc\})$ then satisfies condition~\eqref{eq:suffcondPE}, since
\[
	\Phi(\aa,\{\bb,\cc\}) - H(\aa) = \min \{ \vdtr, 2\hdtr\} +1 = \max_{\s \neq \bb, \cc} \Phi(\s, \{\bb,\cc\}) - H(\s),
\]
and Proposition~\ref{prop:modindep}(i) then yields statements (i) and (ii) of Theorem~\ref{thm:tabc}. Furthermore, by combining the latter identity and inequality~\eqref{eq:tGaabbcc}, we obtain
\[
	\Phi(\aa,\{\bb,\cc\}) - H(\aa) = \min \{ \vdtr, 2\hdtr\}+1 > \min \{ \vdtr, 2\hdtr\} \geq \max_{\s \neq \aa, \bb, \cc} \Phi(\s, \{\aa,\bb,\cc\}) - H(\s),
\]
and thus condition~\eqref{eq:suffcondAE} holds for the pair ($\aa$,$\{\bb,\cc\}$). Proposition~\ref{prop:modindep}(ii) then yields the asymptotic exponentiality of the rescaled tunneling time $\tabc / \E \tabc$, \ie
\begin{equation}
\label{eq:aetabc}
	\frac{\tabc}{\E \tabc} \cd \rmexp(1), \quad \text{ as } \binf,
\end{equation}
proving Theorem~\ref{thm:tabc}(iii).\\

Consider now the other tunneling time $\tab$. From inequality~\eqref{eq:lowerboundtG} it immediately follows that
\[
	\max_{\s \neq \bb} \Phi(\s, \bb) - H(\s) \leq \min \{ \vdtr, 2\hdtr\}+1,
\]
which, in view of Proposition~\ref{prop:refpathaatobb}, implies that
\[
	\Phi(\aa,\bb) -H(\aa) = \min \{ \vdtr, 2\hdtr\} +1 = \max_{\s \neq \bb} \Phi(\s, \bb) - H(\s).
\]
Hence the pair $(\aa, \{\bb\})$ satisfies condition~\eqref{eq:suffcondPE} and statements (i) and (ii) of Theorem~\ref{thm:tabc} for the tunneling time $\tab$ immediately follow from Proposition~\ref{prop:modindep}(i). \qed

\subsection{Proof of Theorem \ref{thm:mix_triangularlattice}}
\label{newsub43}
The proof readily follows from Proposition~\ref{prop:modindep3}, since by combining inequality~\eqref{eq:lowerboundtG} and Theorem~\ref{thm:structuralproperties}(ii) we get
\[
	\G^* = \max_{\s \neq \bb} \Phi(\s,\bb)- H(\s) = \min \{ K, 2L \} +1. \hfill \qed
\]

\subsection{Asymptotic exponentiality of the tunneling time \texorpdfstring{$\tab$}{}} 
\label{newsub44}

The pair $(\aa,\{\bb\})$ does not satisfy condition~\eqref{eq:suffcondAE}, due to the presence of a deep cycle (the one where configuration $\cc$ lies) different from the initial configuration $\aa$ lies. Indeed, 
$
	\Phi(\aa,\bb) - H(\aa) \not < \Phi(\cc,\bb) - H(\cc),
$
as shown in Theorem~\ref{thm:structuralproperties}(ii). Hence, the proof of Theorem~\ref{thm:tabc}(iv) does not follow from the general results outlined in Proposition~\ref{prop:modindep}, as in the case of statement (iii), but leverages in a crucial way the structure of the state space $\cX$.

In view of the intrinsic symmetry of a triangular grid $\L$, it is intuitive that the energy landscape $\cX$ on which the Markov chain $\smash{\xtbb}$ evolves is highly symmetric, as witnessed by Figure~\ref{fig:ss4x6}. In this subsection, we show that the $2K\times 3L$ triangular grid has nontrivial automorphisms and discuss the consequences of this fact for the state space $\cX$.
We then leverage these symmetries to derive properties for the tunneling time $\tab$ (Proposition~\ref{prop:transitiontimeproperties}) and a stochastic representation for this latter (Corollary~\ref{cor:stochasticrepresentationTxy}), and, ultimately, to prove the asymptotic exponentiality of the scaled hitting time $\t^{\aa}_{\bb} / \E \t^{\aa}_{\bb}$ in the limit $\binf$, \ie Theorem~\ref{thm:tabc}(iv).

For every $k=0,\dots,6L-1$, the \textit{axial symmetry with respect to column} $c_k$ is the permutation $\xi_k: V \to V$ that maps site $(i,j)$ into site $(i, 2k-j)$, see Figure~\ref{fig:automorphism_triangular_grid} for an example. 
In any such axial symmetry neighboring sites are mapped into neighboring sites, namely any pair of sites $u,v$ form an edge if and only if the sites $\xi_k(u),\xi_k(v)$ form an edge. Hence, $\xi_k$ is an automorphism of the graph $\L$ for every $k=0,\dots,6L-1$. Each of these axial symmetries swaps two of the three components while mapping the third one to itself. Specifically,
\begin{equation}
\label{eq:xik}
	\begin{cases}
	\xi_k(\AA)=\AA, \, \xi_k(\BB)=\CC, \, \xi_k(\CC)=\BB, & \text{ if } k \equiv 0 \pmod 3,\\
	\xi_k(\AA)=\CC, \, \xi_k(\BB)=\BB, \, \xi_k(\CC)=\AA, & \text{ if } k \equiv 1 \pmod 3,\\
	\xi_k(\AA)=\BB, \, \xi_k(\BB)=\AA, \, \xi_k(\CC)=\CC, & \text{ if } k \equiv 2 \pmod 3.
	\end{cases}
\end{equation}
\vspace{-0.65cm} 
\begin{figure}[!ht]
	\centering
	\subfloat{\includegraphics{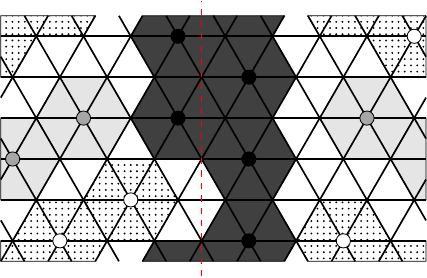}}
	\hspace{1cm}
	\subfloat{\includegraphics{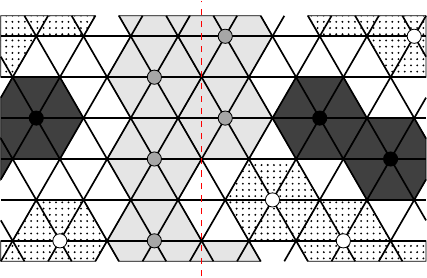}}
	\caption{A hard-core configuration on the $6 \times 9$ triangular grid $\L$ before (left) and after (right) the axial symmetry with respect to column $c_8$, highlighted as dashed red vertical line, which maps column $c_j$ into $c_{16 - j}$ for every $j=0,\dots,16$ and column $c_{17}$ into itself. This axial symmetry induces an automorphism $\xi_{\aa,\bb}$ of the triangular grid $\L$ and, in particular, maps gray sites into black sites (and vice-versa) while white sites are only permuted.}
	\label{fig:automorphism_triangular_grid}
\end{figure}
\FloatBarrier


As illustrated by the next proposition, these axial symmetries of $\L$ induce automorphisms of the state space diagram $\cX$ corresponding to the hard-core dynamics on $\L$. Hence, the state space $\cX$ is highly symmetric, as clearly visible in Figure~\ref{fig:ss4x6}, which shows the state space diagram of the hard-core model on the $4 \times 6$ triangular grid $\L$. Leveraging the symmetry of $\cX$, we construct a coupling between different copies of the Markov chain $\smash{\xtbb}$ and prove in this way properties of the first hitting time $\t^\aa_{\{\bb,\cc\}}$.

\begin{prop}[Tunneling time properties] \label{prop:transitiontimeproperties}
Let $\smash{\xtbb}$ be the Metropolis Markov chain corresponding to the hard-core dynamics on the $2K \times 3L$ triangular grid. Then, for every $\b > 0$,
\begin{itemize}
	\item[\textup{(i)}] The random variable $\smash{X_{\t^\aa_{\{\bb,\cc\}}}}$ has a uniform distribution over $\{\bb,\cc\}$;
	\item[\textup{(ii)}] $\smash{\t^\aa_{\{\bb,\cc\}} \ed \t^\bb_{\{\aa,\cc\}} \ed \t^\cc_{\{\aa,\bb\}}}$;
	\item[\textup{(iii)}] The random variables $\t^\aa_{\{\bb,\cc\}}$ and $X_{\t^\aa_{\{\bb,\cc\}}}$ are independent.
\end{itemize}
\end{prop}

\begin{proof}
For the purpose of this proof it is enough to consider three axial symmetries that cover the three cases in~\eqref{eq:xik} and thus we denote $\xi_{\bb,\cc}:=\xi_0$, $\xi_{\aa,\cc}:=\xi_1$, and $\xi_{\aa,\bb}:=\xi_2$.
The automorphism $\xi_{\bb,\cc}: V \to V$ induces a permutation $\overline{\xi}$ of the collection $\cX$ of hard-core configurations on $\L$. More precisely, $\overline{\xi}$ maps the hard-core configuration $\s \in \cX$ into a new configuration $\overline{\xi}(\s)$ defined as $(\overline{\xi}(\s))(v) = \s(\xi_{\bb,\cc}(v))$ for every $v \in V$.
In fact, $\overline{\xi}$ is an automorphism of the state space diagram, seen as a graph with vertex set $\cX$ and such that any pair of hard-core configurations $\s,\s' \in \cX$ is connected by an edge if and only if $\s$ and $\s'$ differ in no more than one site, \ie $\|\s-\s'\|\leq 1$. By construction,
\begin{equation}
\label{eq:automorphism_abc}
	\overline{\xi}(\bb)=\cc, \quad \overline{\xi}(\cc)=\bb, \quad \text{ and } \quad \overline{\xi}(\aa)= \aa.
\end{equation}
Assume the Metropolis Markov chain $\smash{\xtbb}$ on $\L$ starts in configuration $\aa$ at time $0$. Let $\smash{\{Y^\b_t\}_{t\in \N}}$ be the Markov chain that mimics the moves of the Markov chain $\smash{\xtbb}$ via the automorphism $\overline{\xi}$, \ie set $\smash{Y^\b_t:=\overline{\xi} \big (X^\b_t\big )}$ for every $t \in \N$. 
For notational compactness, we suppress in this proof the dependence on $\b$ of these two Markov chains. For any pair of hard-core configurations $\s,\s' \in \cX$, any transition of the chain $Y^\b_t$ from $\h=\overline{\xi}(\s)$ to $\h'=\overline{\xi}(\s')$ is feasible and occurs with the same probability as the transition from $\s$ to $\s'$, since $\overline{\xi}$ is an automorphism. Therefore, the Markov chains $\{X_t\}_{t\in \N}$ and $\{Y_t\}_{t\in \N}$ are two copies of the hard-core dynamics on $\L$ living in the same probability space, and we have then defined in this way a coupling between them. In view of~\eqref{eq:automorphism_abc}, this coupling immediately implies that the Markov chain $\{X_t\}_{t\in \N}$ started at $\aa$ hits configuration $\bb$ precisely when the chain $\{Y_t\}_{t\in \N}$ hits $\cc$. 
Hence,
\begin{equation}
\label{eq:jointdistribution}
	\pr{X_{\tau^\aa_{\{\bb,\cc\}}}=\bb , \, \tau^\aa_{\{\bb,\cc\}} \leq t}  = \pr{\overline{\xi}(X_{\tau^\aa_{\{\bb,\cc\}}})=\overline{\xi}(\bb) , \tau^{\overline{\xi}(\aa)}_{\overline{\xi}(\{\bb,\cc\})} \leq t} = \pr{Y_{\tau^\aa_{\{\bb,\cc\}}}=\cc , \, \tau^\aa_{\{\bb,\cc\}} \leq t}.
\end{equation}
Taking the limit $t \to \infty$ in~\eqref{eq:jointdistribution}, we obtain
\[
	\pr{X_{\tau^\aa_{\{\bb,\cc\}}}=\bb}  = \pr{Y_{\tau^\aa_{\{\bb,\cc\}}}=\cc}.
\]
Using the fact that $\{X_t\}_{t\in \N}$ and $\{Y_t\}_{t\in \N}$ have the same statistical law, being two copies of the same Markov chain, it then follows that the random variable $\smash{X_{\tau^\aa_{\{\bb,\cc\}}}}$ has a uniform distribution over $\{\bb,\cc\}$, that is property (i). In particular,
\begin{equation}\label{eq:uniformdistribution}
	\pr{X_{\tau^\aa_{\{\bb,\cc\}}}=\bb}=\frac{1}{2}.
\end{equation}
Let $\hat{\xi}$ be the permutation of $\cX$ induced by the automorphism $\xi_{\{\aa,\cc\}} \circ \xi_{\{\aa,\bb\}}$. Constructing the coupling using $\hat{\xi}$ and arguing as above, we can deduce that 
\[
\pr{X_{\tau^\aa_{\{\bb,\cc\}}}=\bb , \, \tau^\aa_{\{\bb,\cc\}} \leq t}  = \pr{\hat{\xi}(X_{\tau^\aa_{\{\bb,\cc\}}})=\hat{\xi}(\bb) , \tau^{\hat{\xi}(\aa)}_{\hat{\xi}(\{\bb,\cc\})} \leq t} = \pr{Y_{\tau^\bb_{\{\cc,\aa\}}}=\cc , \, \tau^\bb_{\{\cc,\aa\}} \leq t},
\]
and
\[
	\pr{X_{\tau^\aa_{\{\bb,\cc\}}}=\cc , \, \tau^\aa_{\{\bb,\cc\}} \leq t}  = \pr{\hat{\xi}(X_{\tau^\aa_{\{\bb,\cc\}}})=\hat{\xi}(\cc) , \tau^{\hat{\xi}(\aa)}_{\hat{\xi}(\{\bb,\cc\})} \leq t} = \pr{Y_{\tau^\bb_{\{\cc,\aa\}}}=\aa , \, \tau^\bb_{\{\cc,\aa\}} \leq t}.
\]
Summing side by side these latter two identities yields that for every $t \geq 0$
\[
	\pr{\tau^\aa_{\{\bb,\cc\}} \leq t}  = \pr{\tau^\bb_{\{\cc,\aa\}} \leq t},
\]
proving property (ii). Note that
\begin{equation}
\label{eq:partialidentity}
	\pr{\tau^\aa_{\{\bb,\cc\}} \leq t}  =  \pr{X_{\tau^\aa_{\{\bb,\cc\}}}=\bb , \, \tau^\aa_{\{\bb,\cc\}} \leq t} +  \pr{X_{\tau^\aa_{\{\bb,\cc\}}}=\cc , \, \tau^\aa_{\{\bb,\cc\}} \leq t} = 2 \cdot \pr{X_{\tau^\aa_{\{\bb,\cc\}}}=\bb , \, \tau^\aa_{\{\bb,\cc\}} \leq t},
\end{equation}
where the last passage follows from~\eqref{eq:jointdistribution} using again the fact that $\{X_t\}_{t\in \N}$ and $\{Y_t\}_{t\in \N}$ have the same statistical law. Combining identities~\eqref{eq:uniformdistribution} and~\eqref{eq:partialidentity}, we obtain that for every $t\geq 0$,
\[
	\pr{X_{\tau^\aa_{\{\bb,\cc\}}}=\bb , \, \tau^\aa_{\{\bb,\cc\}} \leq t} = \pr{X_{\tau^\aa_{\{\bb,\cc\}}}=\bb} \cdot \pr{\tau^\aa_{\{\bb,\cc\}} \leq t},
\]
that is property (iii).
\end{proof}

The next corollary shows how the symmetries of the hard-core dynamics on a triangular grid $\L$ derived in Proposition~\ref{prop:transitiontimeproperties} can be used to obtain a stochastic representation for the tunneling time $\tab$, that will be crucial to prove Theorem~\ref{thm:tabc}(iv). The underlying idea is that, on the time-scale at which the transition from $\aa$ to $\bb$ occurs, the evolution of $\smash{\xtbb}$ can be represented by a $3$-state Markov chain with a complete graph as state space diagram whose states correspond to the three valleys/cycles around the stable configurations $\aa$, $\bb$, and $\cc$. Similar ideas have been successfully used to describe metastability and tunneling phenomena in~\cite{BeltranLandim15,Landim2016a,Landim2016,Zocca2018}.

\begin{cor}[Stochastic representation of the tunneling time $\tab$] \label{cor:stochasticrepresentationTxy}
Let $\{\t^{(i)}\}_{i \in \N}$ be a sequence of i.i.d.~random variables with common distribution $\smash{\tau \ed \tabc}$ and $\mathcal{G}$ an independent geometric random variable with success probability $1/2$, namely $\prin{\mathcal{G} = m} =2^{-m}$, for $m \geq 1$. Then,
\begin{equation}
\label{eq:geometricrepresentationTxy}
	\tab \, \ed \sum_{i=1}^{\mathcal{G}} \t^{(i)},
\end{equation}
and, in particular, $\E \tab = 2 \cdot \E \tabc$. Furthermore, if additionally there exists a non-negative random variable $Y$ such that $\smash{ \t /\E \t  \cd Y}$ as $\binf$, then
\begin{equation}
\label{eq:aetxy}
	\frac{\tab}{\E \tab} \cd \frac{1}{\E \mathcal{G}} \sum_{i=1}^{\mathcal{G}} Y^{(i)}, \quad \mathrm{ as } \, \, \binf,
\end{equation}
where $\{Y^{(i)}\}_{i \in \N}$ is a sequence of i.i.d.~random variables distributed as $Y$.
\end{cor}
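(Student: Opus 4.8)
The plan is to turn the distributional symmetries of Proposition~\ref{prop:transitiontimeproperties} into a genuine renewal structure on the successive visits of $\xtbb$ to the stable configurations. Starting the chain at $\aa$, I would define recursively a sequence of stopping times $R_0 := 0 < R_1 < R_2 < \cdots$ together with stable configurations $\theta_0 := \aa, \theta_1, \theta_2, \dots \in \{\aa,\bb,\cc\}$, where $R_k$ is the first time after $R_{k-1}$ at which the chain hits a stable configuration different from $\theta_{k-1}$ and $\theta_k := X^\b_{R_k}$. The excursion times are $\tau_k := R_k - R_{k-1}$, so that the first excursion time $\tau_1$ is exactly $\tabc$ and $\theta_1 = X^\b_{\tabc}$.

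By the strong Markov property, conditionally on $\theta_{k-1}$ the pair $(\tau_k,\theta_k)$ is independent of the past and distributed as the (time, endpoint) pair of a tunneling excursion started at $\theta_{k-1}$. Here all three statements of Proposition~\ref{prop:transitiontimeproperties} enter: part (ii) gives $\tau_k \ed \tabc$ regardless of which stable configuration $\theta_{k-1}$ is; part (i), together with its images under the automorphisms $\xi_{\aa,\bb},\xi_{\aa,\cc},\xi_{\bb,\cc}$, gives that $\theta_k$ is uniform over the two stable configurations other than $\theta_{k-1}$; and part (iii) gives that $\tau_k$ and $\theta_k$ are conditionally independent. Combining these, I would argue that conditioning on the entire landing sequence $\{\theta_k\}$ leaves each $\tau_k$ still distributed as $\tabc$ and mutually independent, so that $\{\tau_k\}$ are i.i.d.\ copies of $\tabc$ and independent of $\{\theta_k\}$. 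The sequence $\{\theta_k\}$ is itself a Markov chain on $\{\aa,\bb,\cc\}$ that moves to each of the other two states with probability $1/2$; since from both $\aa$ and $\cc$ the chance of jumping to $\bb$ equals $1/2$, the number $\mathcal{G} := \min\{k \geq 1 : \theta_k = \bb\}$ of excursions needed to reach $\bb$ is geometric with $\prin{\mathcal{G}=m}=2^{-m}$, and being a function of $\{\theta_k\}$ alone it is independent of $\{\tau_k\}$. As $\tab = R_{\mathcal{G}} = \sum_{k=1}^{\mathcal{G}}\tau_k$, this yields~\eqref{eq:geometricrepresentationTxy}, and Wald's identity gives $\E\tab = \E\mathcal{G}\cdot\E\tabc = 2\,\E\tabc$.

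For the limit law~\eqref{eq:aetxy}, I would pass to Laplace transforms to avoid interchanging the limit $\binf$ with the infinite sum over the unbounded support of $\mathcal{G}$. Writing $W_\b := \tab/\E\tab = (\E\mathcal{G})^{-1}\sum_{i=1}^{\mathcal{G}}\t^{(i)}/\E\tabc$ and conditioning on $\mathcal{G}$, the independence of $\mathcal{G}$ and $\{\t^{(i)}\}$ gives, for every $u \geq 0$,
\[
	\E\big[e^{-u W_\b}\big] = \sum_{m=1}^\infty 2^{-m}\Big(\E\big[e^{-u\,\t/(\E\mathcal{G}\,\E\tabc)}\big]\Big)^m.
\]
Since $\t/\E\tabc \cd Y$ and $x \mapsto e^{-ux/\E\mathcal{G}}$ is bounded and continuous on $[0,\infty)$, each inner expectation converges to $\E[e^{-uY/\E\mathcal{G}}]$ as $\binf$; summing the resulting geometric series (whose ratio stays in $[0,1/2]$, so convergence is uniform enough to pass to the limit) and invoking the continuity theorem for Laplace transforms then identifies the limit as the transform of $(\E\mathcal{G})^{-1}\sum_{i=1}^{\mathcal{G}}Y^{(i)}$, which is~\eqref{eq:aetxy}.

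The main obstacle I anticipate is the bookkeeping in the second paragraph: one must verify that the three distributional symmetries of Proposition~\ref{prop:transitiontimeproperties} combine, through the strong Markov property, into the \emph{joint} independence of the i.i.d.\ excursion-time sequence $\{\tau_k\}$ and the landing sequence $\{\theta_k\}$, rather than merely a step-by-step independence, and that the excursion law is truly the same from each of $\aa,\bb,\cc$ (which is where the full automorphism group of the grid, not just a single symmetry, is used). Once this regeneration structure and the resulting independence of $\mathcal{G}$ from $\{\t^{(i)}\}$ are secured, the exact geometric law of $\mathcal{G}$ and the Laplace-transform computation are routine.
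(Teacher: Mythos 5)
Your proposal is correct and follows essentially the same route as the paper: decompose $\tab$ into successive tunneling excursions between stable configurations, use Proposition~\ref{prop:transitiontimeproperties}(i)--(iii) to get an i.i.d.\ excursion-time sequence independent of the geometric number $\mathcal{G}$ of excursions, apply Wald's identity, and pass to Laplace transforms (your explicit geometric series is exactly the composition $G_{\mathcal{G}}(\LT_\t(\cdot))$ used in the paper). You spell out the regeneration bookkeeping in more detail than the paper does, but the underlying argument is identical.
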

\begin{proof}
Let $\mathcal{G}$ be the random variable counting the number of non-consecutive visits of the Markov chain to $\{\aa,\cc\}$ until $\bb$ is hit for the first time (counting the initial configuration $\aa$ as first visit). In view of Proposition~\ref{prop:transitiontimeproperties}(i), the random variable $\mathcal{G}$ is geometrically distributed with success probability $\frac{1}{2}$, with distribution $\prin{\mathcal{G} = m} =2^{-m}$, for $m \geq 1$. In particular, $\mathcal{G}$ it does not depend on the inverse temperature $\b$. The amount of time it takes for the Markov chain started in a stable configuration to hit any of the other two stable configurations does not depend on the initial stable configuration, by virtue of Proposition~\ref{prop:transitiontimeproperties}(ii). In view of these considerations and using the independence property in Proposition~\ref{prop:transitiontimeproperties}(iii), we deduce the stochastic representation~\eqref{eq:geometricrepresentationTxy} for the tunneling time $\tab$. The identity $\E \tab = 2 \cdot \E \tabc$ then immediately follows from Wald's identity, since both $\mathcal{G}$ and $\tabc$ have finite expectation and $\E \mathcal{G} = 2$.

Lastly, we turn to the proof of the limit in distribution~\eqref{eq:aetxy}. Denoting by $\LT_{A}(s)=\E (e^{-s A})$, with $s \geq 0$, the Laplace transform of a random variable $A$, the stochastic representation~\eqref{eq:geometricrepresentationTxy} yields
$
	\LT_{\tab} = G_{\mathcal{G}}\left(\LT_{\t}(s)\right),
$
where $G_{\mathcal{G}}(\cdot)$ is the probability generating function of the random variable $\mathcal{G}$, \ie $G_{\mathcal{G}}(z)=\E (z^{\mathcal{G}})$ for every $z \in [0,1]$. By assumption $\LT_{\t /\E \t } (s) \to \LT_Y(s)$ as $\binf$. Using the fact that $\E \tab  = \E \t \cdot \E \mathcal{G}$ we obtain
\[
	\LT_{\tab /\E\tab}  = G_{\mathcal{G}} \left(\LT_{\t/ \E \t}(s / \E \mathcal{G})\right) \stackrel{\binf}{\longrightarrow} G_{\mathcal{G}} \left(\LT_{Y}(s / \E \mathcal{G})\right),
\]
and the continuity theorem for Laplace transforms yields the conclusion.
\end{proof}

\begin{proof}[Proof of Theorem~\ref{thm:tabc}\textup{(iv)}] 

Corollary~\ref{cor:stochasticrepresentationTxy} yields
\[
	\frac{\tab}{\E \tab} \cd \frac{1}{2} \sum_{i=1}^{\rmgeo(1/2)} Y^{(i)},  \quad \text{ as } \binf,
\]
where $\{Y^{(i)}\}_{i \in \N}$ are i.i.d.~exponential random variables, in view of~\eqref{eq:aetabc}. The statement in Theorem~\ref{thm:tabc}(iv) then follows by noticing that a geometric sum of i.i.d.~exponential random variables scaled by its mean is also exponentially distributed with unit mean. 
\end{proof}

\textbf{Acknowledgments} The author has been supported by NWO grants 639.033.413 and 680.50.1529 and is grateful to F.R.~Nardi, S.C.~Borst, and  J.S.H.~van Leeuwaarden for the precious feedback on this work.

\end{document}